\pgfplotsset{compat=1.12}
\theoremstyle{plain}
\newtheorem{theorem}{Theorem}[section]
\newtheorem{lemma}[theorem]{Lemma}
\newtheorem{proposition}[theorem]{Proposition}
\newtheorem{corollary}[theorem]{Corollary}
\newtheorem{claim}[theorem]{Claim}
\newtheorem{problem}[theorem]{Problem}
\theoremstyle{definition}
\newtheorem{definition}[theorem]{Definition}
\newtheorem{exam}[theorem]{Example}
\theoremstyle{remark}
\newtheorem{remark}[theorem]{Remark}
\theoremstyle{plain}
\newtheorem*{theorem*}{Theorem}
\newtheorem*{lemma*}{Lemma}
\def\beq{\begin{equation}}
\def\eeq{\end{equation}}
\def\baeq{\begin{aligned}}
\def\eaeq{\end{aligned}}
\def\lb{\label}
\def\bremark{\begin{remark}}
\def\eremark{\end{remark}}
\newcommand{\RR}{\mathbb{R}}
\newcommand{\CC}{{\mathbb C}}
\newcommand{\ZZ}{{\mathbb Z}}
\newcommand{\NN}{{\mathbb N}}
\newcommand{\dcal}{\mathcal{D}}
\newcommand{\fcal}{\mathcal{F}}
\newcommand{\pcal}{\mathcal{P}}
\newcommand{\scal}{\mathcal{S}}
\newcommand{\tcal}{\mathcal{T}}
\newcommand{\diag}{{\operatorname{diag}}}
\newcommand{\vol}{{\operatorname{Vol}}}
\def\Re{{\operatorname{Re}}}
\def\tr{\operatorname{tr}}
\def\path{\operatorname{Path}}
\def\Sym{\operatorname{Sym}}
\def\i{\sqrt{-1}}
\def\beq{\begin{equation}}
\def\eeq{\end{equation}}
\def\baeq{\begin{aligned}}
\def\eaeq{\end{aligned}}
\def\lb{\label}
\title{Convexity and the degenerate special Lagrangian equation}
\date{\today}
\author{Vasanth Pidaparthy, Yanir A. Rubinstein}
\begin{document}

\maketitle

\begin{abstract}
In 2015 Rubinstein--Solomon introduced the degenerate special Lagrangian equation (DSL) that governs geodesics in the space of positive Lagrangians, showed that 
subsolutions in the top branch of DSL are convex in space, and raised
the question of whether they should be convex in space-time
and whether subsolutions in the second branch possess any convexity
properties.
In 2019, Darvas--Rubinstein gave a partial answer to the first problem by showing subsolutions in the top branch must
be bi-convex.
We settle both questions. 
The key new ingredient is a space-time coordinate transformation that
preserves the space-time Lagrangian angle and allows for a partial $C^2$ estimate. This also shows that the top two branches of the DSL subequation have a $\star$-product structure in the sense of Ross--Witt-Nystr\"om.
\end{abstract}


\section{Motivation}
\label{section:introduction-DSL-convexity-short}

While the potential theory of special Lagrangian equation (SL) has
been much-studied since the seminal works of Harvey--Lawson \cite{HarLaw-calibratedgeom, HarLaw-dirichletduality}, 
the analogous theory for the degenerate special Lagrangian equation
is still quite new and undeveloped. 
This article contributes to the potential theory of the degenerate special Lagrangian (DSL) equation. Our main goal is to obtain the optimal convexity properties of viscosity subsolutions of the DSL. 
These properties turn out to be different than the corresponding
convexity properties of subsolutions of the SL.
This asymmetry between the two equations is unlike the classical situation 
for the Monge--Amp\`ere equation and the degenerate Monge--Amp\`ere equation, and indeed the 
analysis in our setting is more delicate than that in the Monge--Amp\`ere setting.

\subsection{The special Lagrangian equation}

To put the problems we study concerning the DSL in perspective, let us start by quickly reviewing the by-now-classical convexity properties
for subsolutions of the SL. The SL is a second order fully non-linear elliptic PDE on a domain $D \subset \RR^n$
that is the  local model for volume-minimizing
Lagrangian submanifolds. 
A special Lagrangian $L$ is a (calibrated) Lagrangian submanifold of a Calabi--Yau manifold $(X,\omega, J, \Omega)$, satisfying 
$\Omega|_{L}/d\vol_L = \hbox{\rm const}$,
where $\Omega$ denotes a non-vanishing holomorphic $(n,0)$-form
on $X$ and $d\vol_L$ is the Riemannian volume form on $L$ induced from
the Riemannian structure $g(\,\cdot\,,\,\cdot\,):=\omega(\,\cdot\,,J\,\cdot\,)$ restricted to $L$.
While $\Omega$ can be rescaled so that the constant above is 
simply 1, the convention is to fix $\Omega$ once and for all. Then the constant can be any unit complex number $e^{\i \theta}$, where $\theta\in [0,2\pi)$ is the so-called Lagrangian angle of $L$.
 Harvey--Lawson observed that for the model Calabi--Yau manifold $\CC^n$, the graph of the gradient $Dv$ of a function $v:D \rightarrow \RR$ over a domain $D \subset \RR^n$ is 
 SL if and only if it satisfies the SL \cite[\S III.2]{HarLaw-calibratedgeom}
\begin{equation}
    \label{eq:the-SL-DSL-convexity-short}
    \begin{aligned}
        \sum_{i=1}^n \tan^{-1} \lambda_i(D^2v) = c \qquad &\text{on } D,
    \end{aligned}
\end{equation}
for some $c\in \left(-\frac{n}{2}\pi, \frac{n}{2}\pi \right)$ satisfying $c \equiv_{2\pi} 
\theta $, where $\tan^{-1}$ in \eqref{eq:the-SL-DSL-convexity-short} denotes the odd branch 
\beq
\lb{taninvEq}\tan^{-1}:\RR \rightarrow (-{\pi}/{2}, {\pi}/{2}).
\eeq 
The left hand side of \eqref{eq:the-SL-DSL-convexity-short} is called the lifted Lagrangian angle of the Lagrangian graph. For a given $\theta$, 
there are different
choices of $c\in \left(-\frac{n}{2}\pi, \frac{n}{2}\pi \right)$ with $c\equiv_{2\pi} 
\theta $; each corresponds to a different {\it branch} of the SL. Uniqueness of solutions for  \eqref{eq:the-SL-DSL-convexity-short} cannot be expected (and, in fact, fails) unless a particular branch is chosen, and in general the regularity of solutions
varies greatly depending on the choice of the branch; as a rule, solutions in higher branches possess better regularity.
According to Harvey--Lawson there is always a viscosity solution to the Dirichlet problem for \eqref{eq:the-SL-DSL-convexity-short} in each branch for suitable Dirichlet data. Much work has gone into showing regularity properties for such solutions, and essentially all known results are for the {\it top two branches}, i.e.,
for $c\in \left[\frac{n-2}{2}\pi,\frac{n-1}{2}\pi \right)$ (second branch) and $c\in \left[\frac{n-1}{2}\pi,\frac{n}{2}\pi \right)$ (top branch).

The potential theory for special Lagrangians has been intensely studied. Caffarelli--Nirenberg--Spruck \cite{caffarelli-nirenberg-spruck-dirichlet-problem-III-MR0806416} first studied the Dirichlet problems for \eqref{eq:the-SL-DSL-convexity-short} with smooth boundary data when $c= \frac{n-1}2\pi$ or $c =\frac{n-2}2\pi$, and constructed $C^2$ solutions when the domain $D$ is uniformly convex. Their method can 
be extended to all angles in the outer two branches $|c| \ge \frac{n-2}2\pi$ constructing $C^2$ solutions in these branches \cite{Trudinger1995-DHE, collins-picard-wu-MR3659638}. Harvey--Lawson constructed continuous viscosity solutions to the Dirichlet problem in every branch by an upper envelope method under convexity assumptions on the domain $D$ \cite[Corollary~10.5]{HarLaw-dirichletduality}. However only solutions in the top two branches $|c| \ge \frac{n-2}2\pi$ are $C^{1,1}$ (everything that can be said about the top two branches also carries over to the bottom two branches, so
we ignore these from now on). In fact, counter-examples to $C^{1,\delta}$ regularity are known in all the inner branches $|c| \le \frac{n-2}2\pi$ for every $\delta>0$ by Nadirashvili--Vl\u{a}du\c{t} \cite{nadirashvili-Vladut-MR2683755} and Wang--Yuan \cite{wang-yuan-singular-solutions-MR3117304}.

Convexity of subsolutions to the SL \eqref{eq:the-SL-DSL-convexity-short} is also well understood. Recall that an upper semi-continuous $v:D \rightarrow \RR$ is called a subsolution to \eqref{eq:the-SL-DSL-convexity-short} in branch $c$ if
\beq
    \label{eq:the-SL-DSL-convexity-short-subsol}
\sum_{i=1}^n \tan^{-1} \lambda_i (D^2v) \ge c,
\eeq
in the viscosity sense. The set of all viscosity subsolutions in the branch $c$ is denoted $F_c(D)$. It follows
directly from 
\eqref{taninvEq}
and \eqref{eq:the-SL-DSL-convexity-short-subsol}
that subsolutions in the top branch $c\ge \frac{n-1}2\pi$ 
are convex, 
All other branches admit subsolutions which are 
not convex.
Subsolutions in the second branch are known to be 2-convex, i.e., the sum of any two eigenvalues of the Hessian is positive by an observation of Yuan \cite{yuan-global-solutions-MR2199179}
(see Lemma~\ref{lemma:properties-of-eigenvalues-DSL-convexity-short}).

\subsection{The degenerate special Lagrangian equation}

Rubinstein--Solomon introduced the 
degenerate special Lagrangian equation (DSL)
and studied the 
Dirichlet problem for the DSL as a degenerate analog of the SL \cite[Theorem 4.1]{Rub-Sol-DSL-MR3620699}. The DSL is fundamental in Lagrangian geometry since it governs geodesics of positive Lagrangian graphs in $\CC^n$. It is a degenerate elliptic PDE for a function $u:(0,1)\times D \rightarrow \RR$, for a domain $D\subset \RR^n$:
\begin{equation}
    \label{eq:the-DSL-DSL-convexity-short}
    \begin{aligned}
        \widetilde{\Theta}(D^2u) = c \qquad &\text{on } \dcal:= (0,1) \times D,
    \end{aligned}
\end{equation}
where 
$\widetilde{\Theta}: \Sym^2(\RR^{n+1}) \rightarrow \RR$ is the upper semi-continuous \textit{lifted space-time Lagrangian angle} (Definition~\ref{def:spacetimelagrangianangle-DSL-convexity-short}), and $c\in \left(-\frac{n+1}2\pi, \frac{n+1}2\pi \right)$.
Rubinstein--Solomon proved existence of Lipschitz continuous viscosity solutions to the Dirichlet problem for \eqref{eq:the-DSL-DSL-convexity-short} in every branch of the DSL for strictly convex domains $D$ and suitable boundary conditions \cite[Theorem 1.2]{Rub-Sol-DSL-MR3620699}. 

An upper semi-continuous $u:\dcal\rightarrow \RR$ is a (viscosity) subsolution to the DSL in branch $c$ if $\widetilde{\Theta}(D^2u) \ge c$ in the viscosity sense, and the set of such subsolutions is denoted by $\fcal_c(\dcal)$. Rubinstein--Solomon observed that the restriction to a time slice $u(t,\cdot)$ of a viscosity subsolution $u\in\fcal_c(\dcal)$ in the top branch $c \ge \frac{n}{2}\pi$ is a subsolution to the top branch of the SL,
i.e., $u(t,\,\cdot\,)\in F_{c-\pi/2}$  (for $t\in(0,1)$). In particular it is convex, making $u$ convex in space \cite[Corollary~9.2]{Rub-Sol-DSL-MR3620699}. It was later observed by Darvas--Rubinstein that subsolutions in the top branch are also convex in time \cite[Lemma~3.6]{Dar-Rub2019AMP}. Thus, such subsolutions are
{\it bi-convex}, i.e., separately convex in space and in time.

\begin{problem}
    \label{problem:joint-convexity-DSL-convexity-short}
    Are subsolutions to the top branch of the DSL \eqref{eq:the-DSL-DSL-convexity-short}, i.e., for $c\in \left[\frac{n}{2}\pi, \frac{n+1}2\pi \right)$, jointly convex on space-time?
\end{problem}

Joint convexity is a considerable strengthening of bi-convexity, and in particular implies that mixed space-time second derivatives are also bounded
a.e., or geometrically, the tangent space to the space-time Lagrangian graph exists a.e.

Analogous to the SL, it is expected that solutions to the outer two branches of the DSL should enjoy better regularity than solutions in the inner branches. Motivated by this, we also investigate convexity of subsolutions in the second branch of the DSL. However there are plenty of examples of solutions and subsolutions in the second branch which fail to be convex in space, and are generally not 2-convex either. Recall that a function is 2-convex if it is subharmonic along every two-dimensional plane. If the function is $C^2$ then 2-convexity is equivalent to non-negativity of the sum of any pair of Hessian eigenvalues.

\begin{exam}
\label{example:second-branch-DSL-convexity-short}
    Fix $\eta>0$ and $\delta\in (0,\frac{\pi}{2})$. Then for any $\epsilon \in (0, \frac{\pi}{2} - \delta)$ sufficiently small let 
    \[\theta_0:= \frac{\pi}{2} - \frac{\epsilon}{n-1}>0, \qquad 
    \qquad \theta_1:= \epsilon + \delta - \frac{\pi}{2} <0,\]
    and consider the quadratic
    \[u(t,x):= \frac{1}{2} \left( \eta t^2 + \sum_{i=1}^{n-1} x_i^2\tan\theta_0 + x_n^2\tan\theta_1  \right). \]
    Since $u$ is a quadratic function, observe that
    \[\begin{aligned}
        &\ \arg\left( I_n + \sqrt{-1}D^2u \right)  \\
        &\ = \arg \left(\begin{pmatrix}
        0  & & & &\\
         & 1 & &  &\\
          & & \ddots& & \\
           & & &  1 &\\
           & & & &  1
    \end{pmatrix} + \sqrt{-1}\begin{pmatrix}
        \eta  & & & &\\
         & \tan \theta_0 & &  &\\
          & & \ddots& & \\
           & & &  \tan \theta_0 &\\
           & & & &  \tan \theta_1
    \end{pmatrix} \right)\\
    &\ = 
    \begin{pmatrix}
        \frac{\pi}{2} & & & &\\
         &  \theta_0 & &  &\\
          & & \ddots& & \\
           & & &  \theta_0 &\\
           & & & &  \theta_1
    \end{pmatrix},
    \end{aligned}\]
    so
    $\widetilde{\Theta}(D^2u) = \frac{\pi}{2} + (n-1)\theta_0 + \theta_1 \ge \frac{n-1}2\pi + \delta
    $ 
    by \eqref{eq:lifted-spacetime-lag-angle-DSL-convexity-short}, i.e., $u \in \fcal_{\frac{n-1}2\pi + \delta}(\RR^{n+1})$. Yet, $\partial_{x_n}^2 u = \tan( \theta_1) = \tan\left(\epsilon + \delta - \frac{\pi}{2} \right) <0$, so $u$ is not convex in $x$. In fact, for $\eta$ sufficiently small, $u$ will not even be 2-convex since $D^2u$ has eigenvalues $\eta$ and $\tan\left( \epsilon + \delta - \frac{\pi}{2} \right)$, and for $\epsilon, \delta, \eta$ sufficiently small
    \[\eta + \tan\left( \epsilon + \delta - \frac{\pi}{2} \right) <0 .\]
\end{exam}
The natural question extending Problem~\ref{problem:joint-convexity-DSL-convexity-short} to the second branch is the following:

\begin{problem}
    \label{problem:joint-convexity-2nd-branch-DSL-convexity-short}
    Are subsolutions to the top two branches of the DSL \eqref{eq:the-DSL-DSL-convexity-short}, i.e., $c \in \left[\frac{n-1}2\pi, \frac{n+1}2\pi \right)$, convex in time?
\end{problem}

\subsection{Minimum Principle}

An important application of the previously known partial convexity of solutions to the DSL in the top branch is the minimum principle for Lagrangian graphs in that branch, established by Darvas--Rubinstein:

\begin{theorem}
{\rm \cite[Theorem~3.1]{Dar-Rub2019AMP}}
    Fix a domain $D\subset \RR^n$ and let $\dcal:= (0,1) \times D$. Let $u\in \fcal_c(\dcal)$ for branch $c\in \left[\frac{n}{2}\pi, \frac{n+1}2\pi \right)$. Then
    \[v(x):= \inf_{t\in (0,1)} u(t,x) \in F_{c-\frac{\pi}{2}}(D).\]
\end{theorem}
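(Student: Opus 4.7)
The plan is to combine the time-convexity of top-branch DSL subsolutions, established in \cite[Lemma~3.6]{Dar-Rub2019AMP}, with a Schur-complement identity for $\widetilde\Theta$, and then conclude by standard regularization.

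The algebraic core is the following identity. For a symmetric $(n+1)\times(n+1)$ matrix written with time entry first,
\[
H=\begin{pmatrix}a & b^T\\ b & C\end{pmatrix},\qquad a>0,
\]
the block-determinant formula gives $\det(\Pi+\sqrt{-1}\,H)=\sqrt{-1}\,a\cdot\det\!\bigl(I_n+\sqrt{-1}\,(C-a^{-1}bb^T)\bigr)$, with $\Pi:=\diag(0,1,\dots,1)$. Choosing the branch of $\arg$ consistent with $a>0$ and using that the Schur complement is real symmetric yields
\[
\widetilde\Theta(H)=\tfrac{\pi}{2}+\sum_{i=1}^n\tan^{-1}\lambda_i\bigl(C-a^{-1}bb^T\bigr).
\]
This identity encodes the $\pi/2$ shift between the DSL branch $c$ and the SL branch $c-\pi/2$: once $u$ is convex in $t$ (so that $a=\partial_t^2u>0$), the time direction contributes exactly $\pi/2$ to the space-time angle, and the remaining budget $c-\pi/2$ must be absorbed by the spatial Schur complement.

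For smooth $u$ that is strictly convex in $t$, the minimizer $t(x)$ defined by $\partial_tu(t(x),x)=0$ is smooth by the implicit function theorem, and the envelope theorem gives
\[
D^2v(x)=D_x^2u-(\partial_t^2u)^{-1}(D_x\partial_tu)(D_x\partial_tu)^T\big|_{(t(x),x)},
\]
which is exactly the Schur complement appearing above. Applying the identity to $D^2u(t(x),x)$ and using $\widetilde\Theta(D^2u)\ge c$ yields the pointwise bound $\sum_i\tan^{-1}\lambda_i(D^2v)\ge c-\pi/2$, so $v\in F_{c-\pi/2}$ in the classical sense.

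For general $u\in\fcal_c(\dcal)$, I would regularize by the sup-convolution
\[
u_\eta(t,x):=\sup_{(s,y)}\Bigl\{u(s,y)-\tfrac{1}{2\eta}\bigl(|s-t|^2+|y-x|^2\bigr)\Bigr\}+\eta t^2,
\]
where the extra $\eta t^2$ restores strict time-convexity (permissible since the DSL subequation is monotone under addition of positive semidefinite matrices). By the standard Harvey--Lawson stability and the translation invariance of $\fcal_c$, $u_\eta$ is again a DSL subsolution on slightly shrunken domains, and being semiconvex it is twice differentiable almost everywhere by Alexandrov's theorem. The smooth-case argument then applies a.e., yielding $v_\eta\in F_{c-\pi/2}$; passing $\eta\to 0$ and invoking viscosity stability of $F_{c-\pi/2}$ under decreasing limits gives $v\in F_{c-\pi/2}(D)$. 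The main technical obstacle here is monitoring the minimizer $t_\eta(x)$ as $\eta\to 0$: since the infimum defining $v$ is over the \emph{open} interval $(0,1)$ and need not be attained, care is needed to ensure that $t_\eta(x)\in(0,1)$ (so that the envelope formula applies) or, when it drifts to the endpoints, to pass to the limit directly using upper semi-continuity and the time-convexity of $u$.
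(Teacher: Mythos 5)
Your proposal pursues the envelope/Schur-complement route of Darvas--Rubinstein, which is genuinely different from how the present paper treats this statement. The paper simply quotes it and then reproves a strictly more general version as Corollary~\ref{cor:minimum-principle-DSL-convexity-short} (covering $c\ge\frac{n-1}{2}\pi$), via the $\star$-product decomposition $\fcal_c=\pcal_1\star F_{c-\pi/2}$ of Proposition~\ref{prop:DSL-top-two-branches-are-products-convexity-short-note} together with the Ross--Witt-Nystr\"om minimum principle, so it never touches the envelope formula or a regularization scheme. What your approach buys is elementarity and explicitness; what the paper's approach buys is access to RWN's general machinery and hence the second branch.

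Your Schur-complement identity is correct, and in fact it is implicit in the paper's machinery: taking $V=-a_{00}^{-1}\vec a$ in Lemma~\ref{lemma:pull-back-and-change-of-basis-DSL-convexity-short} gives $A_V=\diag\bigl(a_{00},\,A^+-a_{00}^{-1}\vec a\vec a^T\bigr)$, and Lemma~\ref{lemma:spectrum-of-AGamma-convexity-note} then yields $\widetilde\Theta(A)=\widetilde\Theta(A_V)=\frac{\pi}{2}+\widetilde\theta\bigl(A^+-a_{00}^{-1}\vec a\vec a^T\bigr)$; this is exactly the computation used in Step (ii) of the proof of Proposition~\ref{prop:DSL-top-two-branches-are-products-convexity-short-note}. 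However, your justification via $\arg\det$ alone is incomplete: $\widetilde\Theta$ is defined as the sum of individual eigenvalue arguments in $(-\pi,\pi)$, not as $\arg\det$, so the block-determinant formula fixes $\widetilde\Theta$ only modulo $2\pi$. Pinning down the branch requires the congruence-invariance-plus-continuity argument of Lemma~\ref{lemma:spectrum-of-AGamma-convexity-note}, not merely ``choosing the branch consistent with $a>0$.''

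Beyond the endpoint issue you flag, there is a second gap in the regularization step: to pass from an a.e.\ Alexandrov Hessian estimate on $v_\eta=\inf_t\bigl[u_\eta(t,x)+\eta t^2\bigr]$ to the viscosity statement $v_\eta\in F_{c-\pi/2}(D)$, you need $v_\eta$ to be semiconvex (or to have comparable one-sided $C^{1,1}$ structure). This is not automatic. The sup-convolution $u_\eta$ is jointly semiconvex, but an infimum over one variable of a jointly semiconvex function need not be semiconvex in the remaining variables --- the obstruction is precisely the concave $-\frac{1}{2\eta}t^2$ term in the semiconvexity decomposition of $u_\eta$, which your added $+\eta t^2$ does not cancel for small $\eta$. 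The clean repair is to first establish joint convexity of $u$ on the top branch, which the paper does directly (and independently of any minimum principle) in the first proof of Theorem~\ref{theorem:affine-slices-of-DSL-are-SLag-DSL-convexity-short}(iii); then $u_\eta$ and hence $v_\eta$ are genuinely convex and the a.e.\ Hessian argument goes through. As written, your outline cites only time-convexity from [Dar-Rub2019AMP, Lemma~3.6], which is not enough for this step.
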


Does this extend to the second branch? More precisely:

\begin{problem}
    \label{problem:minimum-principle-DSL-convexity-short}
    Does a subsolution to the DSL \eqref{eq:the-DSL-DSL-convexity-short} satisfy a minimum principle in the second branch $c \in \left[\frac{n-1}2\pi, \frac{n}{2}\pi \right)$?
\end{problem}




\section{Results}

Problems~\ref{problem:joint-convexity-DSL-convexity-short}, \ref{problem:joint-convexity-2nd-branch-DSL-convexity-short} and \ref{problem:minimum-principle-DSL-convexity-short} are answered in the affirmative in this article. 

\subsection{Convexity of subsolutions to DSL}
\label{subsection:Results-DSL-convexity-short}

\begin{theorem}
\label{theorem:affine-slices-of-DSL-are-SLag-DSL-convexity-short}
Fix a bounded domain $D\subset \RR^n$ and let $\dcal := (0,1) \times D \subset \RR^{n+1}$. Fix $c \in \left[\frac{n-1}2\pi, \frac{n+1}2\pi \right)$, and consider a viscosity subsolution $u\in \fcal_c(\dcal)$. Then the following hold.
\begin{enumerate}[label={(\roman*)}]
    \item For each $x_0\in D$, $t\mapsto u(t,x_0)$ is a convex function on $(0,1)$.
    \item Let $H\subset \RR\times \RR^n$ be a 2-dimensional affine plane that does not contain $\RR\times \{x_0\} $ for any $x_0 \in \RR^n$. Then $u|_{\dcal\cap H}$ is subharmonic.
    \item If $c \in \left[ \frac{n}{2}\pi, \frac{n+1}2\pi \right)$, then $u$ is jointly convex on $\dcal$.
\end{enumerate}
\end{theorem}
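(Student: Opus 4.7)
The plan is to deduce all three conclusions from a single pointwise algebraic fact about $\widetilde\Theta$ and then invoke the standard monotonicity principle for viscosity subequations: if $F_1 \subset F_2 \subset \Sym^2(\RR^{n+1})$ are both subequations, every $F_1$-subharmonic function is automatically $F_2$-subharmonic. Writing a generic symmetric matrix in block form as $M = \bigl(\begin{smallmatrix} a & b^\top \\ b & C \end{smallmatrix}\bigr)$ with $a \in \RR$, $b \in \RR^n$, $C \in \Sym^2(\RR^n)$, I would first establish the key algebraic lemma: (a) if $\widetilde\Theta(M) \ge \tfrac{n-1}{2}\pi$, then $a \ge 0$ and the Schur complement $\tilde C := C - a^{-1} b b^\top$ is 2-convex (with an upper-semicontinuous limit interpretation on the stratum $\{a = 0\}$); (b) if $\widetilde\Theta(M) \ge \tfrac{n}{2}\pi$, then $M \succeq 0$.

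The key tool is the one-parameter family of shears $Q_\gamma := \bigl(\begin{smallmatrix} 1 & \gamma^\top \\ 0 & I_n \end{smallmatrix}\bigr)$ for $\gamma \in \RR^n$. A direct calculation gives $Q_\gamma^\top I' Q_\gamma = I'$ for $I' := \diag(0,1,\ldots,1)$ and $\det Q_\gamma = 1$; together with the USC definition of $\widetilde\Theta$ in terms of (the appropriate branch of) $\arg\det(I'+iM)$, this yields the fundamental invariance
\[
\widetilde\Theta(Q_\gamma^\top M Q_\gamma) = \widetilde\Theta(M).
\]
Geometrically $M \mapsto Q_\gamma^\top M Q_\gamma$ is induced by the affine space-time change of variables $(\tau, y) \mapsto (\tau + \gamma\cdot y, y)$, so this is precisely the \emph{space-time coordinate transformation preserving the space-time Lagrangian angle} advertised in the abstract. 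For $a \ne 0$, the choice $\gamma = -b/a$ block-diagonalizes $M$ into $\diag(a, \tilde C)$; on this diagonal form the DSL angle reduces to $\arg(ia) + \sum_{j=1}^n \arctan \tilde\mu_j$, and the inequalities in (a) and (b) follow by elementary arctangent estimates combined with Yuan's 2-convexity observation (Lemma~\ref{lemma:properties-of-eigenvalues-DSL-convexity-short}). The degenerate stratum $\{a = 0\}$ is handled by continuity against the USC envelope.

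The algebraic lemma produces the subequation inclusions
\[
\{\widetilde\Theta \ge \tfrac{n-1}{2}\pi\} \subset \{M : M_{00} \ge 0\}, \qquad \{\widetilde\Theta \ge \tfrac{n}{2}\pi\} \subset \{M : M \succeq 0\},
\]
and since $a \ge 0$ combined with 2-convexity of $\tilde C$ forces $v_1^\top M v_1 + v_2^\top M v_2 \ge 0$ for every orthonormal pair $(v_1, v_2)$ whose projection to $\RR^n$ spans a 2-plane (a short weighted min-max argument on the rank-2 PSD Gram matrix $v_{1,s} v_{1,s}^\top + v_{2,s} v_{2,s}^\top$), we also obtain $\{\widetilde\Theta \ge \tfrac{n-1}{2}\pi\} \subset \{M : \tr(M|_H) \ge 0\}$ for every non-vertical 2-plane $H \subset \RR^{n+1}$. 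By monotonicity any $u \in \fcal_c(\dcal)$ is a viscosity subsolution of each enlarged subequation, and statements (i)--(iii) are the classical translations: viscosity $\partial_t^2 u \ge 0$ is time-convexity on every vertical line, giving (i); viscosity subharmonicity along every non-vertical 2-plane $H$ restricts to subharmonicity of $u|_{\dcal\cap H}$, giving (ii); and viscosity $D^2 u \succeq 0$ for USC $u$ is joint convexity, giving (iii).

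The chief difficulty is verifying the invariance $\widetilde\Theta(Q_\gamma^\top M Q_\gamma) = \widetilde\Theta(M)$. The $I'$-stabilizer and determinant identities are formal, but for non-diagonal $M$ one must control the branch of $\arg\det(I'+iM)$ along the one-parameter family of shears and treat the degenerate stratum $\{a = 0\}$ where the block-diagonalization $\gamma = -b/a$ formally blows up; both points require careful use of the USC envelope construction of $\widetilde\Theta$. Once this invariance is in hand the DSL angle splits cleanly into a degenerate time contribution $\arg(ia)$ and a standard $n$-variable SL angle, after which all the required pointwise estimates descend to known facts from the potential theory of the SL.
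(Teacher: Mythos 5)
Your proposal captures the paper's central idea: the space-time shear is exactly the paper's transformation $A_V := \bigl(\begin{smallmatrix}1 & 0 \\ V & I\end{smallmatrix}\bigr) A \bigl(\begin{smallmatrix}1 & V^\top \\ 0 & I\end{smallmatrix}\bigr)$ of Lemma~\ref{lemma:spectrum-of-AGamma-convexity-note} (your $Q_\gamma^\top M Q_\gamma$ is $A_\gamma$ in the paper's notation), and the invariance $\widetilde{\Theta}(A_V)=\widetilde{\Theta}(A)$ via $\det Q_\gamma = 1$ and unipotence is the same argument. The block-diagonalization $\gamma=-b/a$ producing $\widetilde{\Theta}=\arg(ia)+\widetilde{\theta}(\tilde C)$, combined with Lemma~\ref{lemma:properties-of-eigenvalues-DSL-convexity-short} and the Schur-complement criterion, correctly yields parts (i) and (iii); this is essentially the paper's first proof, though the paper's Lemma~\ref{lemma:time-convexity-DSL-convexity-short} treats the degenerate stratum $a_{00}=0$, $\vec a\neq 0$ explicitly rather than ``by continuity,'' and its Proposition~\ref{prop:affine-slices-of-DSL-matrix-is-SLag-DSL-convexity-short} proves the more flexible statement $\ell_V^* A \in F_{c-\pi/2}$ for \emph{every} $V$, not only the block-diagonalizing one.

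There is, however, a genuine error in your argument for part (ii). The claimed algebraic lemma — that $v_1^\top M v_1 + v_2^\top M v_2 \ge 0$ for every orthonormal pair $v_1,v_2\in\RR^{n+1}$ whose $\RR^n$-projections span a 2-plane, i.e., $\tr(M|_H)\ge0$ for the intrinsic Euclidean trace on any non-vertical $H$ — is false. Take $n=2$, $M = \diag(a,1,-1)$ with small $a>0$, so $\widetilde{\Theta}(M)=\frac{\pi}{2}=\frac{n-1}{2}\pi$ and $M\in\fcal_{\frac{n-1}{2}\pi}$. With $v_1=(0,0,1)$, $v_2=(\sqrt{1-\epsilon^2},\epsilon,0)$ (orthonormal, projections $(0,1)$ and $(\epsilon,0)$ span $\RR^2$), one gets
\[
v_1^\top M v_1 + v_2^\top M v_2 = -1 + a(1-\epsilon^2) + \epsilon^2 \;\longrightarrow\; -1 < 0
\]
as $a,\epsilon\to 0$. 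So the ``short weighted min-max argument'' you gesture at cannot close; $a\ge 0$ plus 2-convexity of $\tilde C$ does not bound the intrinsic restricted trace, because the change of variables $\ell_{t_0,V}$ that block-diagonalizes $M$ is not an isometry of $H$ and a shear stretches the two tangent directions by different factors. What is true, and what the paper proves (Corollary~\ref{cor:affine-slices-of-DSL-are-SL-DSL-convexity-short} and Corollary~\ref{cor:2-convex-second-branch-DSL-convexity-short}), is that $\ell_V^* A \in F_{c-\pi/2}$ for every $V$ (Proposition~\ref{prop:affine-slices-of-DSL-matrix-is-SLag-DSL-convexity-short}), hence 2-convexity of the full pulled-back Hessian $D^2(u\circ\ell_{t_0,V})$ on $\RR^n$; subharmonicity along $H$ is then read off in the coordinates on $H$ induced by $\ell_{t_0,V}^{-1}$, after first identifying the unique $V$ with $H\subset\Graph(\ell_{t_0,V})$ (the paper's Claim~\ref{claim:2-dim-plane-DSL-convexity-short}). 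You would need to (a) upgrade your single block-diagonalizing computation to the all-$V$ statement, and (b) phrase subharmonicity in the graph coordinates rather than the intrinsic Euclidean frame, to make (ii) go through.
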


Problem~\ref{problem:joint-convexity-DSL-convexity-short} is settled by Theorem~\ref{theorem:affine-slices-of-DSL-are-SLag-DSL-convexity-short}(iii) while Problem~\ref{problem:joint-convexity-2nd-branch-DSL-convexity-short} by Theorem~\ref{theorem:affine-slices-of-DSL-are-SLag-DSL-convexity-short}(i). By Example~\ref{example:second-branch-DSL-convexity-short}, a subsolution in the second branch of the DSL need not be 2-convex, but by Theorem~\ref{theorem:affine-slices-of-DSL-are-SLag-DSL-convexity-short}(ii) it is subharmonic along 2-dimensional planes that avoid a pure time direction.



\subsection{Minimum Principle in the second branch}

Ross--Witt-Nystr\"om introduced a notion of $\star$-product of two Dirichlet subequations (Definition \ref{def:product-of-dirichlet-sets-DSL-convexity-short}) \cite[\S 3.1]{Ross-Witt-Nystrom-minimum-principle-MR4350213} using which they generalized the Darvas--Rubinstein minimum principle for Lagrangian graphs. It turns out the top two branches of the DSL can be endowed with $\star$-product structures.

\begin{proposition}
    \label{prop:DSL-top-two-branches-are-products-convexity-short-note}
    For any $c\in \left[\frac{n-1}2\pi, \frac{n+1}2\pi \right)$ (refer to Definitions \ref{def:spacetimelagrangianangle-DSL-convexity-short} and \ref{def:product-of-dirichlet-sets-DSL-convexity-short}),
    \[ \fcal_c = \pcal_1 \star F_{c-\frac{\pi}{2}}. \]
    
\end{proposition}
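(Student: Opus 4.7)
The plan is to verify the equality at the level of matrix cones in $\Sym^2(\RR^{n+1})$ and then invoke the standard correspondence between closed constant-coefficient Dirichlet subequations and their viscosity subsolution classes. Since $\pcal_1$ and $F_{c-\pi/2}$ are Dirichlet subequations, so is their $\star$-product; unpacking Definition~\ref{def:product-of-dirichlet-sets-DSL-convexity-short} identifies its matrix cone with those symmetric $A$ on $\RR^{n+1}$ whose pure-time entry $A_{tt}$ is nonnegative (the $\pcal_1$ factor) and whose effective spatial Hessian $\widetilde A_{xx}$ (obtained from the time--space block decomposition of $A$, and interpreted by closure when $A_{tt}=0$) lies in $F_{c-\pi/2}$.

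The key pointwise ingredient is the decomposition
\beq
\widetilde\Theta(A) \;=\; \tfrac{\pi}{2} \;+\; \Theta_{\mathrm{SL}}(\widetilde A_{xx}),\qquad A_{tt}>0,
\eeq
which I would derive by applying the space-time coordinate transformation introduced in the paper -- the new ingredient highlighted in the abstract -- to block-diagonalize the complexified lifted Hessian $J+\sqrt{-1}\,A$ (where $J=\mathrm{diag}(0,1,\ldots,1)$ is the lift appearing in Definition~\ref{def:spacetimelagrangianangle-DSL-convexity-short}) without changing $\widetilde\Theta$, and then reading off the argument via the block-determinant formula. Both matrix-level inclusions are now immediate: if $A_{tt}\ge 0$ and $\widetilde A_{xx}\in F_{c-\pi/2}$ then $\widetilde\Theta(A)\ge \tfrac{\pi}{2}+(c-\tfrac{\pi}{2})=c$; conversely, if $\widetilde\Theta(A)\ge c\ge \tfrac{n-1}{2}\pi$, the bound $\Theta_{\mathrm{SL}}\le \tfrac{n}{2}\pi$ rules out $A_{tt}<0$ (this is the pointwise counterpart of Theorem~\ref{theorem:affine-slices-of-DSL-are-SLag-DSL-convexity-short}(i)), and the displayed formula then yields $\Theta_{\mathrm{SL}}(\widetilde A_{xx})\ge c-\tfrac{\pi}{2}$.

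The main obstacle is the degenerate boundary case $A_{tt}=0$ with $A_{tx}\ne 0$, where the block-diagonalizing transformation is singular and the formula fails literally. I would handle this by perturbing $A$ to $A+\varepsilon e_t e_t^T$, reducing to the nondegenerate case, and passing to the limit as $\varepsilon\downarrow 0$ using upper semi-continuity of $\widetilde\Theta$ together with closedness of $F_{c-\pi/2}$ as a subequation. With the matrix-level equality in hand, equality at the viscosity level follows from the standard Dirichlet subequation formalism, completing the proof.
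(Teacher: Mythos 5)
Your plan is essentially the paper's proof, reorganized around the Schur complement. The displayed formula $\widetilde\Theta(A)=\tfrac{\pi}{2}+\widetilde\theta(\widetilde A_{xx})$ for $a_{00}>0$, with $\widetilde A_{xx}=A^+-\vec a\vec a^T/a_{00}$, is exactly what the paper extracts by taking $V=-\vec a/a_{00}$ so that $A_V=\diag(a_{00},\ell_V^*A)$, and then applying Claim~\ref{claim:lag-ang-ST-lag-ang-formula-DSL-convexity-short} together with the invariance $\widetilde\Theta(A)=\widetilde\Theta(A_V)$ of Lemma~\ref{lemma:spectrum-of-AGamma-convexity-note}; the paper's Step~(ii) is precisely this computation, and your ruling-out of $a_{00}<0$ via the sign change $\arg(\sqrt{-1}a_{00})=-\pi/2$ is a clean shortcut to the conclusion of Lemma~\ref{lemma:time-convexity-DSL-convexity-short}. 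The perturbation/closure handling of the boundary case matches the paper's density reduction.

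Two points in the sketch deserve to be made explicit before it is a proof. First, your reinterpretation of $\pcal_1\star F_{c-\pi/2}$ purely in terms of $a_{00}\ge 0$ and the single Schur complement $\widetilde A_{xx}$ is asserted, but Definition~\ref{def:product-of-dirichlet-sets-DSL-convexity-short} requires $\ell_V^*A\in F_{c-\pi/2}$ for \emph{all} $V$; the bridge is the completion-of-squares identity
\begin{equation*}
\ell_V^*A \;=\; \widetilde A_{xx} + a_{00}\bigl(V+\tfrac{\vec a}{a_{00}}\bigr)\bigl(V+\tfrac{\vec a}{a_{00}}\bigr)^T \qquad (a_{00}>0),
\end{equation*}
combined with $F_{c-\pi/2}+\pcal_n\subset F_{c-\pi/2}$. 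This is the place where the paper instead invokes Proposition~\ref{prop:affine-slices-of-DSL-matrix-is-SLag-DSL-convexity-short}, which directly controls $\widetilde\theta(\ell_V^*A)$ for arbitrary $V$. Second, the "bound $\Theta_{\mathrm{SL}}\le\tfrac{n}{2}\pi$ rules out $A_{tt}<0$" step needs to state the sign-flipped formula $\widetilde\Theta(A)=-\tfrac{\pi}{2}+\widetilde\theta(\widetilde A_{xx})$ when $a_{00}<0$, rather than appeal to the $a_{00}>0$ formula; once written down, the contradiction with $\widetilde\Theta(A)\ge\tfrac{n-1}{2}\pi$ is immediate. With these two points filled in, the argument is correct and closely parallels the paper's.
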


Problem~\ref{problem:minimum-principle-DSL-convexity-short} is resolved in Corollary~\ref{cor:minimum-principle-DSL-convexity-short} by invoking the $\star$-product structure and the Ross--Witt-Nystr\"om minimum principle for product subequations \cite[Main Theorem]{Ross-Witt-Nystrom-minimum-principle-MR4350213}. Note that the chosen convention for the $\star$-product from Definition~\ref{def:product-of-dirichlet-sets-DSL-convexity-short} differs from the original definition of Ross--Witt-Nystr\"om \cite[Definition~3.1]{Ross-Witt-Nystrom-minimum-principle-MR4350213} because it reverses the roles of the two arguments. This change is chosen to stay consistent with the Darvas--Rubinstein minimum principle for Lagrangian graphs \cite[Theorem~3.1]{Dar-Rub2019AMP} (see Remark~\ref{remark:product-convention-DSL-convexity-short}).

\begin{corollary}
\label{cor:minimum-principle-DSL-convexity-short}
    Fix a domain $D\subset \RR^n$ and let $\dcal := (0,1) \times D$. For $c \in \left[\frac{n-1}2\pi, \frac{n+1}2\pi \right)$, if $u \in \fcal_c(\dcal)$ then
    \[v(x) := \inf_{t\in (0,1) } u(t,x)\in F_{c-\frac{\pi}{2}}(D).\]
\end{corollary}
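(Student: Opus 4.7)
The corollary is obtained by combining Proposition~\ref{prop:DSL-top-two-branches-are-products-convexity-short-note} with the Ross--Witt-Nystr\"om minimum principle for $\star$-product subequations \cite[Main Theorem]{Ross-Witt-Nystrom-minimum-principle-MR4350213}. The plan is as follows.

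First, I would invoke the product decomposition $\fcal_c = \pcal_1 \star F_{c-\frac{\pi}{2}}$ furnished by Proposition~\ref{prop:DSL-top-two-branches-are-products-convexity-short-note}. This rewrites the hypothesis $u\in \fcal_c(\dcal)$ as $u\in (\pcal_1 \star F_{c-\frac{\pi}{2}})(\dcal)$ on the product domain $\dcal=(0,1)\times D$, reducing the entire claim to an instance of the general minimum principle for subsolutions of product subequations. In particular, no new Hessian calculations are needed once the $\star$-product structure is in hand.

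Second, I would feed this product subsolution into the Ross--Witt-Nystr\"om minimum principle. With the convention fixed in Definition~\ref{def:product-of-dirichlet-sets-DSL-convexity-short} (which swaps the two factors relative to the original formulation of Ross--Witt-Nystr\"om, as explained in Remark~\ref{remark:product-convention-DSL-convexity-short}), the theorem asserts that taking the infimum of a subsolution of $\pcal_1 \star F_{c-\frac{\pi}{2}}$ over the first factor $(0,1)$ yields a viscosity subsolution of the second factor $F_{c-\frac{\pi}{2}}$ on $D$. Applied to our $u$, this directly produces $v(x) = \inf_{t\in(0,1)} u(t,x) \in F_{c-\frac{\pi}{2}}(D)$, which is exactly the conclusion.

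Finally, I would check that the technical hypotheses of the Ross--Witt-Nystr\"om statement are in place: both $\pcal_1$ and $F_{c-\frac{\pi}{2}}$ are valid Dirichlet subequations, and the infimum $v$ is upper semi-continuous as the infimum over $t$ of the USC function $u$, combined with the convexity of $t\mapsto u(t,x)$ from Theorem~\ref{theorem:affine-slices-of-DSL-are-SLag-DSL-convexity-short}(i), which in particular ensures $v$ takes values in $\RR\cup\{-\infty\}$ and behaves well under restriction to compact sub-intervals of $(0,1)$. The only real obstacle is bookkeeping around conventions --- making sure the order of the factors in Definition~\ref{def:product-of-dirichlet-sets-DSL-convexity-short} matches the direction in which the infimum is taken --- and verifying that nothing pathological happens at the open endpoints $t=0,1$; the substantive content has already been absorbed into Proposition~\ref{prop:DSL-top-two-branches-are-products-convexity-short-note}.
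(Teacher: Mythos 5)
Your proposal follows exactly the same route as the paper: invoke the $\star$-product decomposition $\fcal_c = \pcal_1 \star F_{c-\frac{\pi}{2}}$ from Proposition~\ref{prop:DSL-top-two-branches-are-products-convexity-short-note} and then apply the Ross--Witt-Nystr\"om minimum principle. The one substantive point you omit, however, is precisely the hypothesis the paper does verify. The Ross--Witt-Nystr\"om minimum principle is stated for \emph{convex} $\star$-products, and the condition that actually needs checking is that $F_{c-\frac{\pi}{2}}$ is a \emph{convex subset} of $\Sym^2(\RR^n)$. This holds if and only if $c-\frac{\pi}{2}\ge \frac{n-2}{2}\pi$, i.e., $c\ge\frac{n-1}{2}\pi$, by Yuan's observation \cite{yuan-global-solutions-MR2199179}; it is exactly this convexity that restricts the corollary to the top two branches and could not hold in lower branches. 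Your ``technical hypotheses'' paragraph instead dwells on points that are either automatic in the Ross--Witt-Nystr\"om framework (upper semi-continuity of $v$, behavior at the open endpoints) or are about convexity of the function $t\mapsto u(t,x)$ rather than convexity of the subequation $F_{c-\pi/2}\subset\Sym^2(\RR^n)$ --- two distinct notions. The core argument is right, but the convexity of the second factor subequation is the one hypothesis that merits explicit mention, and is missing.
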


\subsection{Legendre duality}
\label{sec:Legendre-duality-DSL-convexity-short}

A nice application of the minimum principle for product subequations is a Legendre duality between the Dirichlet problem for the product subequation and a family of obstacle problems for its Legendre transform. Ross--Witt-Nystr\"om first observed such a correspondence between test curves and weak geodesic rays in pluripotential theory as a consequence of the Kiselman’s minimum principle \cite[Theorem~1.1]{RWN-corresp-MR3194078}. Darvas--Rubinstein proved a minimum principle for the top branch of the DSL and as a consequence proved a duality between the Dirichlet problem for the DSL and a family of rooftop obstacle problems for the SL \cite[\S{4}]{Dar-Rub2019AMP}. Ross--Witt-Nystr\"om introduced the $\star$-product subequation and generalized the minimum principle to the $\star$-product. They remarked that such a Legendre duality for general product subequations should follow from their minimum principle \cite[p. 28]{Ross-Witt-Nystrom-minimum-principle-MR4350213}. Indeed Darvas--Rubinstein's original argument for the Legendre duality transform for the top branch of the DSL \cite[{\S}4]{Dar-Rub2019AMP} generalizes easily to general product subequations by the Ross--Witt-Nystr\"om minimum principle \cite[Main Theorem]{Ross-Witt-Nystrom-minimum-principle-MR4350213}. 
As an application of \eqref{prop:DSL-top-two-branches-are-products-convexity-short-note}, the Legendre duality transform extends to the second branch of the DSL by Proposition~\ref{prop:DSL-top-two-branches-are-products-convexity-short-note} to obtain a dual characterization of the Dirichlet problem for the DSL up to the second branch.

Fix a domain $D\subset \RR^n$ and a function $u: [0,1] \times D \rightarrow \RR$. Recall the following partial Legendre transform from \cite[(26)]{Dar-Rub2019AMP}
\[u^\star (\tau, x) := \inf_{t\in [0,1]} u(t,x) - t\tau \]
and its inverse transform \cite[(27)]{Dar-Rub2019AMP}
\[ v^\star (t,x) := \sup_{\tau \in \RR} v(\tau, x) +t \tau . \]
The partial Legendre transform defined here differs from the standard one
in convexity \cite[p.~104]{rockafeller-convex-analysis-MR1451876} by a sign. In particular, if $t\mapsto u(t,x)$ is convex, it holds that $u^{\star\star} =  u $. Also consider the rooftop envelope
\begin{equation}
    \label{eq:obstacle-envelope-DSL-convexity-short}
    P_{a}(h,f) := \sup \left\{ w\in F_a(D):
    \begin{gathered}
        w \le h \text{ on } D,\\ w\le f  \text{ on } \partial D
    \end{gathered}  \right\} \quad \forall\ h,f \in C^0(D),\ a\in \left(-\frac{n}{2} \pi, \frac{n}{2} \pi\right).
\end{equation}

\begin{corollary}
\label{cor:rooftop-obstacle-DSL-convexity-short}
    Fix a bounded domain $D\subset \RR^n$, let $\dcal := (0,1)\times D$, and fix a boundary condition $g\in C^0(\partial \dcal)$. Fix a phase angle in the top two branches $c \in [\frac{n-1}{2}\pi, \frac{n+1}{2} \pi)$. Assume that $\partial D$ is strictly $\vec{\tilde{F}}_{c-\pi/2}$-convex \cite[{\S}5]{HarLaw-dirichletduality}. Fix a function $u\in C^0(\dcal)$. Then $u$ is a viscosity solution to the Dirichlet problem
    \begin{equation}
    \label{eq:Dirichlet-problem-DSL-convexity-short}
        \begin{aligned}
            \widetilde{\Theta}(D^2u) = c& \qquad \text{on } \dcal,\\
            u = g& \qquad \text{on } \partial \dcal,
        \end{aligned}
    \end{equation}
    if and only if its partial Legendre transform $u^\star(\tau,\cdot)$ is a solution to a rooftop obstacle problem
    \begin{equation}
    \label{eq:DP-formula-1-DSL-convexity-short}
        u^\star(\tau,x) = P_{c-\frac{\pi}{2}}\left(\min\{g(0,\cdot),g(1,\cdot)-\tau\}, \inf_{t\in [0,1]} g(t,\cdot) - t\tau \right).
    \end{equation}
\end{corollary}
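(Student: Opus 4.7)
The plan is to adapt the Darvas--Rubinstein Legendre-duality proof from \cite[\S4]{Dar-Rub2019AMP}, originally valid only for the top branch ($c\ge \frac{n}{2}\pi$), to the top two branches by feeding it three structural ingredients: (a) convexity in $t$ of every $u\in\fcal_c(\dcal)$, from Theorem~\ref{theorem:affine-slices-of-DSL-are-SLag-DSL-convexity-short}(i), which gives $u^{\star\star}=u$; (b) the extension of the minimum principle to the second branch, Corollary~\ref{cor:minimum-principle-DSL-convexity-short}, which gives $u^\star(\tau,\cdot)\in F_{c-\pi/2}(D)$ for every $\tau$; and (c) the $\star$-product identity $\fcal_c=\pcal_1\star F_{c-\pi/2}$ from Proposition~\ref{prop:DSL-top-two-branches-are-products-convexity-short-note}, which lets one assemble an arbitrary $\tau$-family in $F_{c-\pi/2}(D)$ back into a DSL subsolution via the inverse partial Legendre transform. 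These are precisely the replacements for the top-branch inputs used in \cite[\S4]{Dar-Rub2019AMP}, and once they are in place the Dar-Rub scheme goes through verbatim down to $c=\tfrac{n-1}{2}\pi$.

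For the forward direction, assume $u$ solves \eqref{eq:Dirichlet-problem-DSL-convexity-short}. For each fixed $\tau\in\RR$, the function $(t,x)\mapsto u(t,x)-t\tau$ still lies in $\fcal_c(\dcal)$, so Corollary~\ref{cor:minimum-principle-DSL-convexity-short} gives $u^\star(\tau,\cdot)\in F_{c-\pi/2}(D)$. Reading off the boundary data at $t=0,1$ yields $u^\star(\tau,\cdot)\le h_\tau:=\min\{g(0,\cdot),g(1,\cdot)-\tau\}$ on $D$, and for $x\in\partial D$ the Dirichlet condition $u(\,\cdot\,,x)=g(\,\cdot\,,x)$ on $[0,1]$ gives directly $u^\star(\tau,x)=\inf_t g(t,x)-t\tau=f_\tau(x)$. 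Hence $u^\star(\tau,\cdot)$ is admissible in the envelope \eqref{eq:obstacle-envelope-DSL-convexity-short}, so $u^\star(\tau,\cdot)\le W(\tau,\cdot):=P_{c-\pi/2}(h_\tau,f_\tau)$.

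For the reverse inequality, set $V(t,x):=\sup_\tau\bigl(W(\tau,x)+t\tau\bigr)$. By Proposition~\ref{prop:DSL-top-two-branches-are-products-convexity-short-note} applied to the family $\{W(\tau,\cdot)\}_{\tau\in\RR}\subset F_{c-\pi/2}(D)$, one obtains $V\in\fcal_c(\dcal)$, and a direct check using $W(\tau,\cdot)\le h_\tau$ on $D$ and $W(\tau,\cdot)\le f_\tau$ on $\partial D$ shows $V\le g$ on each face of $\partial\dcal$. Perron maximality of the continuous viscosity solution $u$ to \eqref{eq:Dirichlet-problem-DSL-convexity-short}, guaranteed by the strict $\vec{\tilde{F}}_{c-\pi/2}$-convexity of $\partial D$, then forces $V\le u$; on the other hand $V(t,x)\ge \sup_\tau(u^\star(\tau,x)+t\tau)=u^{\star\star}(t,x)=u(t,x)$ by Theorem~\ref{theorem:affine-slices-of-DSL-are-SLag-DSL-convexity-short}(i), so $V=u$. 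Keeping a single $\tau$-term inside the sup defining $V$ yields $u(t,x)-t\tau\ge W(\tau,x)$, and taking the infimum over $t$ gives $u^\star(\tau,\cdot)\ge W(\tau,\cdot)$; together with the forward bound this is exactly \eqref{eq:DP-formula-1-DSL-convexity-short}. The reverse implication of the biconditional follows by running the same identifications in reverse: given that $u^\star$ equals the rooftop envelope $W$, the function $u^{\star\star}=W^\star$ is in $\fcal_c(\dcal)$ by Proposition~\ref{prop:DSL-top-two-branches-are-products-convexity-short-note}, agrees with $g$ on $\partial\dcal$ by the boundary calculation above, and is identified with $u$ through uniqueness of the Perron solution.

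The principal obstacle will be the verification $V=W^\star\in\fcal_c(\dcal)$, since this is the one step that demands assembling an otherwise unrelated family of $F_{c-\pi/2}(D)$-subsolutions into a space-time DSL subsolution. That is exactly the content of Proposition~\ref{prop:DSL-top-two-branches-are-products-convexity-short-note} together with Definition~\ref{def:product-of-dirichlet-sets-DSL-convexity-short} of the $\star$-product, and it is what allows the Dar-Rub Legendre-duality scheme to survive the descent from the top branch down to the second branch; the remaining manipulations are formal identities for the partial Legendre transform already present in \cite[\S4]{Dar-Rub2019AMP}.
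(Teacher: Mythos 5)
Your proposal takes essentially the same approach as the paper: the paper itself omits the proof, pointing out that the Darvas--Rubinstein Legendre-duality argument of \cite[\S4]{Dar-Rub2019AMP} for the top branch carries over to the second branch once Corollary~\ref{cor:minimum-principle-DSL-convexity-short} extends the minimum principle, and you have sketched exactly that argument with the right substitutions. One caveat worth flagging: attributing the step $V = W^\star \in \fcal_c(\dcal)$ to Proposition~\ref{prop:DSL-top-two-branches-are-products-convexity-short-note} alone is a bit imprecise, since that proposition is a purely linear-algebraic identity in $\Sym^2(\RR^{n+1})$. The passage from it to the function-theoretic claim that the inverse partial Legendre transform of a $\tau$-family in $F_{c-\pi/2}(D)$ is a $\fcal_c(\dcal)$-subsolution additionally requires the restriction theorem (Theorem~\ref{thm:restriction-theorem-DSL-convexity-short}) together with the standard fact that upper-semicontinuous suprema of subsolutions are subsolutions; this is precisely the analytic machinery that \cite[\S4]{Dar-Rub2019AMP} (equivalently, the Ross--Witt-Nystr\"om maximum principle for $\star$-products) provides and that the paper is leaning on when it refers the reader there. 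This is an attribution imprecision rather than a mathematical gap, and the overall scheme matches the paper's intended proof.
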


Since $t\mapsto u(t,x)$ is convex by Theorem~\ref{theorem:affine-slices-of-DSL-are-SLag-DSL-convexity-short}(i), \eqref{eq:DP-formula-1-DSL-convexity-short} yields a formula for viscosity solutions to the DSL in the top two branches
    \begin{equation}
        \label{eq:DP-formula-2-DSL-convexity-short}
        u(t,x) = \left( P_{c-\frac{\pi}{2}}\left(\min\{g(0,\cdot),g(1,\cdot)-\tau\}, \inf_{t\in [0,1]} \left[g(t,\cdot) - t\tau \right] \right) \right)^\star.
    \end{equation}
    
Corollary~\ref{cor:rooftop-obstacle-DSL-convexity-short} was established in \cite[Theorem~4.4]{Dar-Rub2019AMP} for the top branch $c\in [\frac{n}{2} \pi, \frac{n+1}{2} \pi)$ as a corollary of the minimum principle for Lagrangian graphs \cite[Theorem~3.1]{Dar-Rub2019AMP}. By Corollary~\ref{cor:minimum-principle-DSL-convexity-short}, the same argument also extends to the second branch $c\in [\frac{n-1}{2} \pi, \frac{n}{2} \pi)$. So a proof of Corollary~\ref{cor:rooftop-obstacle-DSL-convexity-short} is omitted and the reader is referred to \cite[{\S}4]{Dar-Rub2019AMP} for more details.

\subsection*{Organization}
Section~\ref{sec:preliminaries} recalls the definition of subequations and subsolutions, and recalls the subequations for the SL and DSL. Theorem~\ref{theorem:affine-slices-of-DSL-are-SLag-DSL-convexity-short} is proved in Section~\ref{sec:convexity-of-subsolutions-inthe-top-two-branches}. The definition of $\star$-products of subequations is recalled in Section~\ref{sec:Minimum-principle} followed by proofs of Proposition~\ref{prop:DSL-top-two-branches-are-products-convexity-short-note} and Corollary~\ref{cor:minimum-principle-DSL-convexity-short}. 

\subsection*{Acknowledgments}

Thanks to T. Darvas
for the second proof of Theorem \ref{theorem:affine-slices-of-DSL-are-SLag-DSL-convexity-short}(iii).
Research supported in part by NSF grants DMS-1906370, 2204347, 2506872, BSF grants 2016173, 2020329, and a Simons Fellowship in Mathematics.

\section{Preliminaries}
\label{sec:preliminaries}

This section recalls the definition of Dirichlet subequations, and subequations for SL \eqref{eq:the-SL-DSL-convexity-short} and DSL \eqref{eq:the-DSL-DSL-convexity-short} \cite{HarLaw-dirichletduality, Rub-Sol-DSL-MR3620699}. General facts about subequations are recalled in \S~\ref{subsection:dirichlet-sets-and-subequations-DSL-convexity-short} \cite{HarLaw-dirichletduality}. The lifted Lagrangian angle $\widetilde{\theta}$, lifted space-time Lagrangian angle $\widetilde{\Theta}$, and  subequations for SL, DSL are also recalled in \S\ref{subsection:subequations-for-the-DSL-DSL-convexity-short} \cite{HarLaw-dirichletduality, Rub-Sol-DSL-MR3620699}. 
In \S\ref{subsection:product-dirichlet-set-DSL-convexity-short}, a family of affine maps $\ell_{t_0,V}$ is defined, and the pullback of a Hessian under these maps is computed. The affine maps are used to prove Theorem~\ref{theorem:affine-slices-of-DSL-are-SLag-DSL-convexity-short}(iii) and Proposition~\ref{prop:DSL-top-two-branches-are-products-convexity-short-note}.

\subsection{Dirichlet subequations}
\label{subsection:dirichlet-sets-and-subequations-DSL-convexity-short}

\begin{definition}
\label{def:dirichlet-set-misc-definitions-DSL-convexity-short}
    The set of $n\times n$ symmetric positive semi-definite matrices is denoted by $\pcal_n = \{A \ge 0\} \subset \Sym^2(\RR^n)$.
    A \textit{Dirichlet subequation} $F \subset \Sym^2(\RR^n)$ is a non-empty proper closed subset satisfying 
    \[F+\pcal_n \subset F.\] 
    Basic examples of Dirichlet subequations are $\pcal_n$, and matrices of non-negative trace \[\tcal_n := \{ \tr(A) \ge 0 \} \subset \Sym^2(\RR^n).\]
\end{definition}


\begin{definition}
\label{def:subsolution-DSL-convexity-short}
    Fix a domain $D\subset \RR^n$. An upper semi-continuous function $u:D \rightarrow \RR$ is a \textit{viscosity subsolution to a subequation $F$} (or an \textit{$F$-subsolution}) if for every $x_0 \in D$ and every $\varphi \in C^2(D)$ satisfying $\varphi(x) \le u(x)$ and $\varphi(x_0) = u(x_0)$, $D^2\varphi(x_0) \in F$. The set of $F$-subsolutions is denoted by $F(D)$.
\end{definition}

If $u$ were a $C^2$ function, then $u$ is an $F$-subsolution at $x_0$ if and only if $D^2u(x_0)  \in F$. An \textit{$F$-solution} would be a subsolution satisfying $D^2u(x_0) \in \partial F$. This is typically represented by saying $D^2u(x_0) \in F$ and $D^2u(x_0) \in \overline{F^c}$, where $F^c = \Sym^2(\RR^n) \setminus F$ denote the complement.

\begin{definition}
\label{def:F-solution-DSL-convexity-short}
    An $F$-subsolution $u:D \rightarrow \RR$ is called an \textit{$F$-solution} or \textit{$F$-subharmonic} if
    \[u\in F(D) \quad \& \quad -u \in (\overline{-F^c})(D).\]
    The set $\overline{-F^c} \subset \Sym^2(\RR^n)$ is also a Dirichlet subequation, is called the \textit{Dirichlet dual} of $F$, and is denoted by $\overline{-F^c} =: \widetilde{F}$.
\end{definition}



The principal fact about Dirichlet subequations required in this article is the Harvey--Lawson restriction theorem.

\begin{theorem}
    \label{thm:restriction-theorem-DSL-convexity-short}
    {\rm\cite[Theorem~4.2]{HarLaw-Restriction-theorem-MR3330548}} Fix integers $m\le n$, and let $i:D_1 \rightarrow D_2$ be a $C^2$ embedding of an open domain $D_1 \subset \RR^m$ into another open domain $D_2 \subset \RR^n$. Let $F_1 \subset \Sym^2(\RR^m)$ and $F_2 \subset \Sym^2(\RR^n)$ be two Dirichlet subequations. Then the following are equivalent.
    \begin{enumerate}[label={(\roman*)}]
        \item For any $u\in C^2(D_2)$ and any $x\in D_1$ with $y:=i(x)$,
        \[D^2u(y) \in F_2 \quad \implies \quad D^2(u\circ i)(x) \in F_1.\]
        \item If $u\in F_2(D_2)$, then $i^*u := u\circ i \in  F_1(D_1)$.
    \end{enumerate}
\end{theorem}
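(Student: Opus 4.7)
The plan is to verify both directions of the equivalence, with (ii)$\Rightarrow$(i) being essentially tautological and (i)$\Rightarrow$(ii) carrying all the analytic content. For (ii)$\Rightarrow$(i), observe that any $u\in C^2(D_2)$ is its own $C^2$ test function at every point, so $u\in F_2(D_2)$ reduces to the pointwise condition $D^2u(y)\in F_2$ on all of $D_2$, and the analogous equivalence holds on $D_1$; applying (ii) to such $u$ thus yields (i) directly.

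For the substantive direction (i)$\Rightarrow$(ii), fix $u\in F_2(D_2)$, a point $x_0\in D_1$, set $y_0:=i(x_0)$, and take $\varphi\in C^2(D_1)$ with $\varphi\le u\circ i$ near $x_0$ and $\varphi(x_0)=u(y_0)$; the goal is $D^2\varphi(x_0)\in F_1$. The strategy is to extend $\varphi$ to a $C^2$ function $\Phi$ on a neighborhood of $y_0$ in $D_2$ satisfying (a) $\Phi\circ i=\varphi$ near $x_0$ and (b) $\Phi$ touches $u$ from below at $y_0$. Once such a $\Phi$ exists, viscosity subharmonicity of $u$ forces $D^2\Phi(y_0)\in F_2$, and hypothesis (i) applied to the $C^2$ function $\Phi$ then hands over $D^2(\Phi\circ i)(x_0)=D^2\varphi(x_0)\in F_1$.

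To produce $\Phi$, I would work in a local chart on $D_2$ in which $i$ becomes the linear inclusion $x\mapsto (x,0)\in \RR^m\times \RR^{n-m}$ and try the ansatz
\[
\Phi(x,y):=\varphi(x)+q\cdot y-A|y|^2,
\]
with $q\in \RR^{n-m}$ and $A>0$ chosen to dominate the first- and second-order transverse behavior of $u$ at $y_0$. Property (a) holds by construction, and $D^2\Phi(y_0)$ is block-diagonal with blocks $D^2\varphi(x_0)$ and $-2A\,I_{n-m}$, so that $D^2(\Phi\circ i)(x_0)=D^2\varphi(x_0)$.

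The main obstacle is property (b): upper semi-continuity of $u$ controls $u$ only from above, so no quadratic $\Phi$ can in general be forced to lie below $u$ transverse to $i(D_1)$. The standard remedy, following Harvey--Lawson, is a regularization--limit argument. First, replace $u$ by its sup-convolution
\[
u^\epsilon(y):=\sup_{z}\bigl\{u(z)-\tfrac{1}{\epsilon}|y-z|^2\bigr\},
\]
which is semi-convex (hence admits a quadratic lower bound in every direction), is twice differentiable almost everywhere in the Alexandrov sense, and remains $F_2$-subharmonic on slightly shrunken domains thanks to the closedness and $\pcal_n$-monotonicity of $F_2$. For $u^\epsilon$ the transverse semi-convexity makes the parameters $q$ and $A$ easy to choose, so the test-function construction succeeds at a.e.\ point, and applying (i) pointwise together with the Alexandrov-to-viscosity equivalence for semi-convex functions yields $u^\epsilon\circ i\in F_1(D_1)$. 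Second, $u^\epsilon\searrow u$ as $\epsilon\to 0$, and since $F_1(D_1)$ is closed under monotone decreasing limits, the conclusion $u\circ i\in F_1(D_1)$ follows. The two technically delicate inputs that I would import rather than reprove are the stability of $F$-subharmonicity under sup-convolution and the Alexandrov-to-viscosity passage for semi-convex functions, both standard in the subequation literature.
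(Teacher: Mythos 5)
The paper itself does not prove this statement; it cites it verbatim as Harvey--Lawson's restriction theorem \cite[Theorem~4.2]{HarLaw-Restriction-theorem-MR3330548}, so there is no internal argument to compare against, and the proposal must be judged on its own terms. Your easy direction (ii)$\Rightarrow$(i) is fine once one replaces $u$ by its second-order Taylor polynomial $p$ at $y_0$, which has constant Hessian $D^2p\equiv D^2u(y_0)\in F_2$ and hence lies in $F_2(D_2)$; applying (ii) to $p$ and noting $D^2(p\circ i)(x_0)=D^2(u\circ i)(x_0)$ gives (i). You also correctly isolate the real difficulty in (i)$\Rightarrow$(ii): an interior test function $\varphi$ for $u\circ i$ at $x_0$ cannot in general be extended to a test function $\Phi$ for $u$ at $y_0=i(x_0)$, because upper semicontinuity gives no one-sided quadratic control on $u$ transverse to $i(D_1)$.

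The gap is the sentence ``For $u^\epsilon$ the transverse semi-convexity makes the parameters $q$ and $A$ easy to choose, so the test-function construction succeeds at a.e.\ point.'' Semi-convexity of $u^\epsilon$ gives, for each fixed $x$ near $x_0$, a one-sided quadratic bound on $y\mapsto u^\epsilon(x,y)$ whose linear slope $q(x)$ is a transverse subgradient \emph{at that particular $x$}; but $q(\cdot)$ need be neither single-valued nor Lipschitz, so there is in general no single pair $(q,A)$ making the ansatz $\Phi(x,y)=\varphi(x)+q\cdot y\pm A|y|^2$ lie on the correct side of $u^\epsilon$ in a full-dimensional neighborhood of $(x_0,0)$: the error $(q(x)-q)\cdot y$ is first order in $|y|$ and cannot be absorbed by $A|y|^2$ when $q(x)\not\to q$. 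The alternative reading of your ``a.e.'' step---Alexandrov twice-differentiability of $u^\epsilon$ on $D_2$ plus pointwise application of (i)---does not work either, because the Alexandrov set has full $n$-dimensional Lebesgue measure while $i(D_1)$ is an $m$-dimensional submanifold with $m<n$, hence Lebesgue-null in $\RR^n$; nothing forces the two to meet. Both readings are missing the same ingredient: a Jensen--Slodkowski-type density/perturbation lemma adapted to the restriction geometry, which converts a.e.\ second-order information on the restricted function $u^\epsilon\circ i$ into full-dimensional contact jets of $u^\epsilon$ at nearby points whose restricted Hessians converge to $D^2(u^\epsilon\circ i)(x_0)$. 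That, not sup-convolution and semi-convexity alone, is the engine of Harvey--Lawson's proof, and without it the passage from hypothesis (i) to $u^\epsilon\circ i\in F_1(D_1)$ does not close. (Separately, note the paper's Definition~3.2 appears to have a sign typo---$\varphi\le u$ should be $\varphi\ge u$ for consistency with $\pcal_n$-subharmonic meaning convex---and your write-up inherits it; this does not change the substance of the gap, but it does flip which one-sided quadratic bound is actually needed.)
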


\bremark
\label{remark:pcal-tcal-dirichlet-sets-DSL-convexity-short}
Since convexity and subharmonicity are associated to Dirichlet subequations $\pcal_n$ and $\tcal_n$, the restriction theorem also applies in these cases. Practically, this will
be used by restricting to affine subspaces of $\RR^{1+n}$.
\eremark

\subsection{Subequations for the SL and DSL}
\label{subsection:subequations-for-the-DSL-DSL-convexity-short}

This section recalls the Dirichlet subequations of interest in this article---the subequations for the SL in \eqref{eq:subeq-for-SL-DSL-convexity-short} and the DSL in \eqref{eq:subeq-for-DSL-DSL-convexity-short}.

\begin{definition}
    \label{def:SL-dirichlet-set-DSL-convexity-short}
    The \textit{lifted Lagrangian angle} $\widetilde{\theta}: \Sym^2(\RR^n) \rightarrow \RR$ is the function
    \begin{equation}
        \label{eq:lifted-lag-angle-DSL-convexity-short}
        \widetilde{\theta}(A) := \sum_{i=1}^n \tan^{-1}\lambda_i(A) \qquad \forall\ A\in \Sym^2(\RR^n),
    \end{equation}
    where $\lambda_1(A), \dots, \lambda_n(A)$ denote the eigenvalues of $A$, and where we choose the odd branch $\tan^{-1}:\RR \rightarrow \left(-\frac{\pi}{2}, \frac{\pi}{2} \right)$. The subequations for the SL \eqref{eq:the-SL-DSL-convexity-short} are
    \begin{equation}
        \label{eq:subeq-for-SL-DSL-convexity-short}
        F_c := \left\{A\in \Sym^2(\RR^n): \sum_{i=1}^n \tan^{-1} \lambda_i(A) \ge c \right\}, \qquad c\in \left( -\frac{n}{2} {\pi}, \frac{n}{2} {\pi} \right).
    \end{equation}
    The set of upper semi-continuous subsolutions is denoted by $F_c(D)$. If $u\in C^2(D)$, then $u$ is a subsolution to the SL \eqref{eq:the-SL-DSL-convexity-short} in branch $c$ if and only if $\widetilde{\theta}(D^2u(x)) \ge c$ for all $x\in D$.
\end{definition}

The Dirichlet subequations for the DSL \eqref{eq:the-DSL-DSL-convexity-short} are defined in terms of the lifted space-time Lagrangian angle $\widetilde{\Theta}$, which, unlike the lifted Lagrangian angle $\widetilde{\theta}$ is not continuous but merely upper semi-continuous. This is one of the main reasons the analysis of the DSL is much more subtle than that of the SL. Consider the following block representation for a matrix $A\in \Sym^2(\RR^{n+1})$,
    \begin{equation}
    \label{eq:blockrepresentationofDSLmatrix-DSL-convexity-short}
        A = \begin{pmatrix}
            a_{00} & \vec{a}^T \\
            \vec{a}& A^+
        \end{pmatrix}, \qquad a_{00} \in \RR,\, \vec{a} \in \RR^n,\, A^+ \in \Sym^2(\RR^n) .
    \end{equation}

\begin{definition}
\label{def:spacetimelagrangianangle-DSL-convexity-short}
Consider the diagonal matrix \[I_n:= \diag(0,1,\dots, 1) \in \Sym^2(\RR^{n+1})\]
and define a class of singular matrices by \[\scal := \{\diag(0,A^+): A^+ \in \Sym^2(\RR^n)\} \subset \Sym^2(\RR^{n+1}).\]
The \textit{lifted space-time Lagrangian angle} $\widetilde{\Theta}: \Sym^2(\RR^{n+1}) \rightarrow \RR$ is defined as
\begin{equation}
    \label{eq:lifted-spacetime-lag-angle-DSL-convexity-short}
    \widetilde{\Theta}(A) := \begin{dcases}
    \sum_{i=1}^{n+1} \arg\left(\lambda_i(I_n + \sqrt{-1}A)\right) \qquad & A \in \Sym^2(\RR^{n+1}) \setminus \scal , \\
    \widetilde{\theta}(A^+) + \frac{\pi}{2} \qquad & A\in \scal,
\end{dcases}
\end{equation}
where $\arg:\CC \setminus (-\infty, 0] \rightarrow \left(-\pi, \pi \right)$ is the counter-clockwise angle with the positive $x$-axis. The space-time Lagrangian angle $\widetilde{\Theta}$ is well defined by Lemma~\ref{lemma:Rub-Sol-Theta-well-def-DSL-convexity-short} since the eigenvalues of $A\in \Sym^2(\RR^{n+1}) \setminus \scal$ lie in $\CC \setminus (-\infty, 0]$. Furthermore, $\widetilde{\Theta}$ is the smallest upper semi-continuous function extending $\widetilde{\Theta}|_{ \Sym^2(\RR^{n+1}) \setminus \scal }$ by Lemma~\ref{lemma:Rub-Sol-continuous-DSL-convexity-short}.
\end{definition}


\begin{lemma}
\label{lemma:Rub-Sol-Theta-well-def-DSL-convexity-short}
    {\rm\cite[Lemma~3.3, 3.4]{Rub-Sol-DSL-MR3620699}} Let $A\in \Sym^2(\RR^{n+1})$ and $B = I_n + \sqrt{-1}A \in \Sym^2(\CC^{n+1})$ where $I_n = \diag(0,1\dots,1) \in \Sym^2(\RR^{n+1})$. Then the eigenvalues $\lambda_1,\dots, \lambda_n $ of $B$ satisfy \[\Re(\lambda_i) \ge 0 \qquad  i=1,\dots, n.\]
    Using notation \eqref{eq:blockrepresentationofDSLmatrix-DSL-convexity-short}, if $\vec{a} \neq 0$ then $\Re(\lambda_i) >0$ and if $a_{00} \neq 0$ then $\lambda_i \neq 0$ for all $i=1,\dots, n$.
\end{lemma}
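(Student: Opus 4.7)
The natural approach is to test the (complex symmetric, non-Hermitian) matrix $B = I_n + \sqrt{-1}\,A$ against its own eigenvectors using the standard Hermitian inner product on $\CC^{n+1}$. Suppose $Bv = \lambda v$ with $v = (v_0,v_1,\ldots,v_n)^T \neq 0$. Taking the Hermitian inner product with $v$ I would obtain
\[
\lambda \|v\|^2 \;=\; \bar v^T B v \;=\; \bar v^T I_n v \,+\, \sqrt{-1}\,\bar v^T A v.
\]
The first summand equals $\sum_{i=1}^{n}|v_i|^2$ and is a nonnegative real number by the block shape of $I_n = \diag(0,1,\ldots,1)$. The second summand is purely imaginary because $A$ is real symmetric: indeed, $\overline{\bar v^T A v} = v^T A \bar v = \bar v^T A^T v = \bar v^T A v$, so $\bar v^T A v \in \RR$. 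Separating real and imaginary parts then yields
\[
\Re(\lambda)\,\|v\|^2 \;=\; \sum_{i=1}^{n} |v_i|^2 \;\geq\; 0,
\]
which is the first assertion.

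\paragraph{Strict positivity when $\vec a \neq 0$.}
If $\Re(\lambda) = 0$ then the computation above forces $v_1 = \cdots = v_n = 0$, so $v = v_0 e_0$ for some $v_0 \neq 0$, where $e_0$ is the first standard basis vector. Testing the eigenvalue equation on such a $v$ gives
\[
B e_0 \;=\; \bigl(\sqrt{-1}\,a_{00},\ \sqrt{-1}\,\vec a^{\,T}\bigr)^{T},
\]
and for this to be a scalar multiple of $e_0$ the vector $\vec a$ must vanish. Contrapositively, if $\vec a \neq 0$ then $\Re(\lambda) > 0$ for every eigenvalue $\lambda$.

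\paragraph{Non-vanishing when $a_{00} \neq 0$.}
If $\lambda = 0$ were an eigenvalue, with eigenvector $v$, then $\bar v^T B v = 0$, so both real and imaginary parts vanish; the real part again gives $v = v_0 e_0$ with $v_0 \neq 0$. Plugging this into $B v = 0$ yields $\sqrt{-1}\,v_0\,(a_{00},\vec a^{\,T})^{T} = 0$, which forces $a_{00} = 0$ (and also $\vec a = 0$). So whenever $a_{00} \neq 0$ (or $\vec a \neq 0$) every eigenvalue is nonzero.

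\paragraph{Expected difficulty.}
There is no real obstacle; the only mildly subtle point is that $B$ is complex symmetric but not Hermitian, so one cannot use the usual spectral theorem. The trick is precisely to pair $Bv$ with the conjugate $\bar v$ and exploit the realness of $A$ to force $\bar v^T A v \in \RR$. Once that is set up, the three conclusions drop out from inspecting when the nonnegative real part $\sum_{i=1}^n |v_i|^2$ can vanish and what that forces on the eigenvector.
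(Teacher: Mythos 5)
Your proof is correct. The paper does not reproduce an argument here---it simply cites Rubinstein--Solomon \cite[Lemmas 3.3, 3.4]{Rub-Sol-DSL-MR3620699}---but the numerical-range/Rayleigh-quotient computation you give (pair $Bv=\lambda v$ with $\bar v$, use that $\bar v^T A v\in\RR$ for real symmetric $A$ and that $\bar v^T I_n v=\sum_{i\ge1}|v_i|^2\ge 0$, then analyze the degenerate case $v\in\CC e_0$) is exactly the standard way to prove this and matches the cited argument in substance.
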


\begin{lemma}
\label{lemma:Rub-Sol-continuous-DSL-convexity-short}
    {\rm\cite[Theorem~3.1]{Rub-Sol-DSL-MR3620699}} The space-time Lagrangian angle $\widetilde{\Theta}$, defined by \eqref{eq:lifted-spacetime-lag-angle-DSL-convexity-short}, is upper semi-continuous on $\Sym^2(\RR^{n+1})$ and continuous on $\Sym^2(\RR^{n+1}) \setminus \scal$.
\end{lemma}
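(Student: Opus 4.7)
The plan is to treat the two claims separately, with both hinging on the eigenvalue-location statements from Lemma~\ref{lemma:Rub-Sol-Theta-well-def-DSL-convexity-short}.

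First I would verify continuity on $\Sym^2(\RR^{n+1}) \setminus \scal$. Since $\scal$ is a linear subspace, hence closed, this complement is open. For $A$ in it, Lemma~\ref{lemma:Rub-Sol-Theta-well-def-DSL-convexity-short} ensures that each eigenvalue $\lambda_i$ of $B := I_n + \sqrt{-1}A$ avoids the branch cut $(-\infty, 0]$: if $\vec{a}\neq 0$, then $\Re(\lambda_i) > 0$, while if $a_{00}\neq 0$, then $\Re(\lambda_i)\geq 0$ and $\lambda_i\neq 0$. Since this avoidance is an open condition stable under perturbation of $B$, continuous dependence of the unordered eigenvalue tuple combined with continuity of $\arg$ on $\CC\setminus(-\infty,0]$ yields local continuity of $\sum_i \arg(\lambda_i)$, and therefore continuity on the entire complement.

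The substantive part is upper semi-continuity at a singular point $A_0\in \scal$. Writing $A_0 = \diag(0, A_0^+)$, the eigenvalues of $B_0 = I_n + \sqrt{-1}A_0$ are $0$ together with the $n$ non-zero values $1 + \sqrt{-1}\lambda_j(A_0^+)$, of argument $\tan^{-1}\lambda_j(A_0^+)$. For any sequence $A_k\to A_0$ I may assume $A_k\notin \scal$, since the case $A_k\in \scal$ already reduces to continuity of $\widetilde{\theta}$. Continuous dependence of eigenvalues allows relabeling so that $n$ eigenvalues of $B_k$ converge to $1 + \sqrt{-1}\lambda_j(A_0^+)$, with arguments converging to $\tan^{-1}\lambda_j(A_0^+)$, while a single remaining eigenvalue $\mu_k$ tends to $0$. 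The delicate ingredient is the behavior of $\arg(\mu_k)$: since $\arg$ is discontinuous at the origin, $\arg(\mu_k)$ need not converge, but Lemma~\ref{lemma:Rub-Sol-Theta-well-def-DSL-convexity-short} still gives $\Re(\mu_k)\geq 0$, forcing $\arg(\mu_k)\in[-\pi/2,\pi/2]$ and thus $\limsup_k \arg(\mu_k)\leq \pi/2$. Summing the contributions,
\[
\limsup_{k\to\infty}\widetilde{\Theta}(A_k) \leq \widetilde{\theta}(A_0^+) + \frac{\pi}{2} = \widetilde{\Theta}(A_0),
\]
which is the required semi-continuity.

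The main obstacle is precisely this one-sided discontinuity of $\arg$ at $0$: without further input, $\widetilde{\Theta}$ could a priori jump in either direction at $A_0\in \scal$. The essential geometric fact that tames this is the half-plane positivity $\Re(\mu_k)\geq 0$ supplied by Lemma~\ref{lemma:Rub-Sol-Theta-well-def-DSL-convexity-short}, which confines the limits of $\arg(\mu_k)$ to $[-\pi/2, \pi/2]$ and thereby yields a \emph{one-sided} bound consistent with the value $\widetilde{\theta}(A_0^+) + \pi/2$ assigned on $\scal$.
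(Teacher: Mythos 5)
The paper does not give its own proof of this lemma; it is cited directly from Rubinstein--Solomon \cite[Theorem~3.1]{Rub-Sol-DSL-MR3620699}. Your reconstruction is correct and is the natural argument: continuity off $\scal$ follows from continuous dependence of the (unordered) eigenvalue tuple together with the fact that Lemma~\ref{lemma:Rub-Sol-Theta-well-def-DSL-convexity-short} confines the eigenvalues of $I_n+\sqrt{-1}A$ to the open set $\CC\setminus(-\infty,0]$ when $A\notin\scal$; and upper semi-continuity at $A_0\in\scal$ comes down to the single nearly-vanishing eigenvalue $\mu_k$, whose argument need not converge but is trapped in $[-\pi/2,\pi/2]$ by the half-plane positivity $\Re(\mu_k)\ge 0$, forcing $\limsup\arg(\mu_k)\le\pi/2$. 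This is precisely what makes the value $\widetilde{\theta}(A_0^+)+\pi/2$ assigned on $\scal$ the least upper semi-continuous extension, consistent with the remark immediately following Definition~\ref{def:spacetimelagrangianangle-DSL-convexity-short}. The only points worth tightening are cosmetic: for a mixed sequence with some $A_k\in\scal$, split into subsequences and handle each (the singular subsequence reduces to continuity of $\widetilde{\theta}$ as you note); and when $A_0^+$ has repeated eigenvalues, appeal to continuity of the unordered eigenvalue multiset rather than trying to label individual branches. Neither affects the validity of the argument.
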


\begin{definition}
    \label{def:DSL-dirichlet-set-DSL-convexity-short}
    The Dirichlet subequations for the DSL \eqref{eq:the-DSL-DSL-convexity-short} are the sets
    \begin{equation}
        \label{eq:subeq-for-DSL-DSL-convexity-short}
        \fcal_c:= \left\{ A \in \Sym^2(\RR^{n+1}) : \widetilde{\Theta}(A) \ge c \right\}, \qquad c\in \left( -\frac{n+1}2\pi, \frac{n+1}{2}\pi \right).
    \end{equation}
    $\fcal_c$ is closed since $\widetilde{\Theta}$ is upper semi-continuous and is a Dirichlet subequation by \cite[Theorem~5.1]{Rub-Sol-DSL-MR3620699}. These Dirichlet subequations define the various branches of the DSL \eqref{eq:the-DSL-DSL-convexity-short}.
\end{definition}

The lifted Lagrangian angle and space time Lagrangian angle are both functions of eigenvalues of their input and satisfy invariance under orthonormal change of coordinates.

\begin{claim}
\label{claim:DSL-partial-regularity-invariance-of-lag-angle-under-rotation}
    Let $U \in O(n)$ be an orthogonal matrix and $V := \diag(1, U) \in O(n+1)$. Then
    \begin{enumerate}[label={(\roman*)}]
        \item $\widetilde{\theta}(B) = \widetilde{\theta}(U^TBU)$ for all $B\in \Sym^2(\RR^n)$.
        \item $\widetilde{\Theta}(A) = \widetilde{\Theta}(V^TAV)$ for all $A\in \Sym^2(\RR^{n+1})$.
    \end{enumerate}
\end{claim}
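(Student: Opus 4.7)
The plan is to prove both parts by reducing to the fact that eigenvalues are invariant under orthogonal similarity. Part (i) is essentially immediate: since $U \in O(n)$, the symmetric matrices $B$ and $U^T B U$ are similar and thus share the same multi-set of eigenvalues, and $\widetilde\theta(B) = \sum_{i=1}^n \tan^{-1}\lambda_i(B)$ depends only on that multi-set.

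For part (ii), my plan is to split into two cases according to whether $A \in \scal$ or not, after recording two preliminary observations about $V$. First, I would check that $V$ fixes $I_n$: writing everything in the $1+n$ block decomposition matched to $I_n = \diag(0, \mathrm{Id}_n)$ and $V = \diag(1, U)$, one has $V^T I_n V = \diag(0, U^T U) = \diag(0, \mathrm{Id}_n) = I_n$. Second, $V$-conjugation preserves the singular stratum $\scal$: in the block form \eqref{eq:blockrepresentationofDSLmatrix-DSL-convexity-short} of $A$, a direct computation yields
\[
V^T A V \;=\; \begin{pmatrix} a_{00} & \vec{a}^T U \\ U^T \vec{a} & U^T A^+ U \end{pmatrix},
\]
so the conditions $a_{00} = 0$ and $\vec{a} = 0$ that characterize $\scal$ are preserved in both directions.

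In the generic case $A \notin \scal$ (equivalently $V^T A V \notin \scal$ by the second observation), combining the first observation with linearity gives $V^T(I_n + \sqrt{-1}\,A) V = I_n + \sqrt{-1}\,V^T A V$. The two complex symmetric matrices $I_n + \sqrt{-1}\,A$ and $I_n + \sqrt{-1}\,V^T A V$ are therefore similar via $V$, hence share the same multi-set of eigenvalues, which by Lemma~\ref{lemma:Rub-Sol-Theta-well-def-DSL-convexity-short} all lie in $\CC \setminus (-\infty, 0]$ where $\arg$ is unambiguously defined. Summing yields $\widetilde\Theta(V^T A V) = \widetilde\Theta(A)$.

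In the singular case $A \in \scal$, write $A = \diag(0, A^+)$, so $V^T A V = \diag(0, U^T A^+ U)$ also lies in $\scal$. Applying the singular branch of \eqref{eq:lifted-spacetime-lag-angle-DSL-convexity-short} and then part (i) gives
\[
\widetilde\Theta(V^T A V) \;=\; \widetilde\theta(U^T A^+ U) + \tfrac{\pi}{2} \;=\; \widetilde\theta(A^+) + \tfrac{\pi}{2} \;=\; \widetilde\Theta(A),
\]
which completes the argument. I do not anticipate a genuine obstacle; the statement is really a consistency check that $\widetilde\Theta$ is invariant under the subgroup of $O(n+1)$ fixing the pure time direction, and the only point worth some attention is the compatibility of $V$ with both $I_n$ and the stratum $\scal$, which in turn rests entirely on the block-diagonal form of $V$ with trivial first block.
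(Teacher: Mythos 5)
Your proof is correct and takes essentially the same approach as the paper: both rest on the observation that $V^T I_n V = I_n$ together with invariance of eigenvalues under orthogonal conjugation. Your version is slightly more explicit in splitting the singular stratum $\scal$ off as a separate case and verifying that $V$-conjugation preserves $\scal$, a point the paper's very terse proof leaves implicit.
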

\begin{proof}
    The eigenvalues of $B$ are the same as the eigenvalues of $U^TBU$, and the eigenvalues of $I_n + \sqrt{-1} A$ are equal to the eigenvalues of
    \[V^T(I_n +  \sqrt{-1}A)V = I_n + \sqrt{-1} V^TAV. \tag*{\qedhere}\]
\end{proof}

The following observations about the eigenvalues in the top two branches of the SL are well known \cite[p. 1357]{yuan-global-solutions-MR2199179}, \cite[Lemma~2.1]{collins-picard-wu-MR3659638}.

\begin{lemma}
\label{lemma:properties-of-eigenvalues-DSL-convexity-short}
    Let $A \in \Sym^2(\RR^n)$ and let $\lambda_1 \ge \dots \ge \lambda_n$ denote its eigenvalues.
    \begin{enumerate}[label={(\roman*)}]
        \item If $\widetilde{\theta}(A) \ge \frac{n-1}2\pi$, then $\lambda_1\ge \dots \ge \lambda_n \ge 0$. In particular $F_{c} \subset \pcal_n$ for $c \ge \frac{n-1}2\pi$.
        \item If $\widetilde{\theta}(A) \ge \frac{n-2}2\pi$, then $\lambda_1\ge \dots \ge \lambda_{n-1} \ge |\lambda_n| \ge 0$. In particular the sum of any two eigenvalues is non-negative.
    \end{enumerate}
\end{lemma}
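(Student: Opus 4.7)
\medskip

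\textbf{Proof proposal.} Both claims will follow from the elementary monotonicity of $\tan^{-1}:\RR\to(-\pi/2,\pi/2)$, together with the odd symmetry $\tan^{-1}(-x)=-\tan^{-1}(x)$, and the normalization that each individual angle satisfies $\tan^{-1}\lambda_i<\pi/2$. The strategy in each case is contradiction: assume the eigenvalue bound fails, and then bound $\widetilde\theta(A)=\sum_i\tan^{-1}\lambda_i$ strictly from above by $\frac{n-1}{2}\pi$ or $\frac{n-2}{2}\pi$ respectively.

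For (i), suppose $\lambda_n<0$. Then $\tan^{-1}\lambda_n<0$, while for the remaining $n-1$ eigenvalues one only has $\tan^{-1}\lambda_i<\pi/2$. Summing gives
\[
\widetilde\theta(A)=\sum_{i=1}^{n-1}\tan^{-1}\lambda_i+\tan^{-1}\lambda_n<(n-1)\tfrac{\pi}{2}+0=\tfrac{n-1}{2}\pi,
\]
contradicting the hypothesis. This forces $\lambda_n\ge 0$, and by the ordering $\lambda_1\ge\cdots\ge\lambda_n$ all other eigenvalues are non-negative as well. The inclusion $F_c\subset\pcal_n$ for $c\ge\frac{n-1}{2}\pi$ is then immediate from the definition of $F_c$.

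For (ii), the key claim is that $\lambda_{n-1}\ge|\lambda_n|$. If $\lambda_n\ge 0$, the claim is automatic from ordering. Otherwise $\lambda_n<0$; suppose for contradiction that $\lambda_{n-1}<|\lambda_n|=-\lambda_n$. Then, using that $\tan^{-1}$ is strictly increasing and odd,
\[
\tan^{-1}\lambda_{n-1}<\tan^{-1}(-\lambda_n)=-\tan^{-1}\lambda_n,
\]
so $\tan^{-1}\lambda_{n-1}+\tan^{-1}\lambda_n<0$. Combining with $\tan^{-1}\lambda_i<\pi/2$ for $i\le n-2$ yields
\[
\widetilde\theta(A)<(n-2)\tfrac{\pi}{2}+0=\tfrac{n-2}{2}\pi,
\]
contradicting the hypothesis. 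Hence $\lambda_{n-1}\ge|\lambda_n|\ge 0$, which then gives $\lambda_i\ge \lambda_{n-1}\ge 0$ for every $i\le n-1$. Any pair $\lambda_i+\lambda_j$ with $\{i,j\}\ne\{n-1,n\}$ has both summands non-negative, while for the critical pair $\lambda_{n-1}+\lambda_n\ge |\lambda_n|+\lambda_n\ge 0$, establishing 2-convexity.

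Neither step presents a genuine obstacle; the only subtlety is keeping track of strict versus non-strict inequalities when reducing the sum bound, which is handled by the fact that $\tan^{-1}<\pi/2$ strictly on all of $\RR$.
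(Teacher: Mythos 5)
Your argument is the standard one for this well-known fact (the paper itself only cites \cite{yuan-global-solutions-MR2199179} and \cite{collins-picard-wu-MR3659638} rather than giving a proof), and it is essentially correct. Both parts proceed by the same clean contradiction: bound $\widetilde\theta(A)$ strictly from above using $\tan^{-1}<\pi/2$ and the odd symmetry of $\tan^{-1}$, and conclude the eigenvalue bounds.

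One small slip in the last step of (ii): the sentence ``any pair $\lambda_i+\lambda_j$ with $\{i,j\}\neq\{n-1,n\}$ has both summands non-negative'' is not quite right, since pairs of the form $\{i,n\}$ with $i\le n-2$ also contain $\lambda_n$, which may be negative. The correct dichotomy is: if neither index is $n$, both summands are $\ge 0$ because $\lambda_1\ge\dots\ge\lambda_{n-1}\ge 0$; if one index is $n$, then $\lambda_i+\lambda_n\ge\lambda_{n-1}+\lambda_n\ge|\lambda_n|+\lambda_n\ge 0$. The estimate you wrote down for the ``critical pair'' in fact already handles all pairs containing $\lambda_n$ by monotonicity, so this is only a matter of phrasing, not substance.

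Incidentally, the quantitative content of (ii) is slightly sharper than 2-convexity: from $\lambda_{n-1}\ge|\lambda_n|$ one gets $\lambda_i+\lambda_j\ge 0$ for \emph{every} pair with $i\ne j$, not only for sums involving $\lambda_n$, which is exactly what the lemma asserts and what your corrected dichotomy delivers.
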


\subsection{Hessian under affine maps}
\label{subsection:product-dirichlet-set-DSL-convexity-short}

This section introduces a family of affine maps $\ell_{t_0,V}$ in \eqref{eq:pull-back-under-ell-DSL-convexity-short-DSL} and computes the pullback of a Hessian under such a map (Claim~\ref{claim:derivative-and-pullback-DSL-convexity-short}). The affine maps are used in the proof of Theorem~\ref{theorem:affine-slices-of-DSL-are-SLag-DSL-convexity-short}. These affine maps also figure in \S\ref{sec:Minimum-principle}. 
Fix $m,n\in\NN$ and consider
\[\ell_{t_0, \Gamma}: \RR^n \rightarrow \RR^{m+n}, \qquad \ell_{t_0, \Gamma}(x):= (t_0 +\Gamma^T x, x) \qquad\forall\ t_0 \in \RR^m,\ \Gamma \in \RR^{n \times m}.\]
Note that the derivative of $\ell_{t_0, \Gamma}$ is independent of $t_0$,  and write \[\ell_{\Gamma} := \ell_{0, \Gamma}.\] 
A linear map $\ell_{t_0, \Gamma}$ induces a pullback on $(2,0)$-tensors and in particular Hessians of real valued functions on $\RR^{m+n}$. Identify $(2,0)$-tensors on $\RR^{m+n}$ with $(m+n) \times (m+n)$ matrices and consider the following block representation for any $A\in \Sym^2(\RR^{m+n})$
\begin{equation}
    \label{eq:n+m-matrix-decomp-DSL-convexity-short}
    A := \begin{pmatrix}
    B & C^T \\ 
    C & D
\end{pmatrix}, \qquad B\in \Sym^2(\RR^m),\, C \in \RR^{n\times m},\, D \in \Sym^2(\RR^n). 
\end{equation}
Let $I \in \Sym^2(\RR^n)$ denote the $n\times n$ identity matrix. Pullback under $\ell_{t_0, \Gamma}$ is given by
\begin{equation}
    \label{eq:pull-back-under-ell-DSL-convexity-short}
    \ell_{t_0, \Gamma}^* A = \ell_{0, \Gamma}^*A = \ell_{\Gamma}^*A = \begin{pmatrix}
        \Gamma & I
    \end{pmatrix} A \begin{pmatrix}
        \Gamma^T \\
        I
    \end{pmatrix} = \Gamma B\Gamma^T + \Gamma C^T + C \Gamma^T + D.
\end{equation}

The specific case of interest in this article is $m=1$, $F = \pcal_1 = [0,\infty) \subset \RR $, and $G = F_c$ a subequation for SL. In this case, in terms of the block representation \eqref{eq:blockrepresentationofDSLmatrix-DSL-convexity-short},
\begin{equation}
    \label{eq:pull-back-under-ell-DSL-convexity-short-DSL}
    \begin{gathered}
    \ell_{t_0, V}: \RR^n \rightarrow \RR^{n+1}, \qquad \ell_{t_0, V}(x):= (t_0 +V^Tx, x) \qquad \forall\ t_0 \in \RR,\ V \in \RR^{ n \times 1}, \\
    \ell_{t_0,V}^*A = \ell_{0,V}^*A = \ell_{V}^*A = \begin{pmatrix}
        V & I
    \end{pmatrix} A \begin{pmatrix}
        V^T\\
        I
    \end{pmatrix} = a_{00}VV^T + V \vec{a}^T + \vec{a} V^T + A^+.
    \end{gathered}
\end{equation}
According to the next computation, the action \eqref{eq:pull-back-under-ell-DSL-convexity-short-DSL} precisely corresponds to pullback when the matrices are Hessians.
\begin{claim}
\label{claim:derivative-and-pullback-DSL-convexity-short}
        Fix $u\in C^2(\RR \times \RR^n)$, and let $(t,x):= (t_0 + V^Tx, x) = \ell_{t_0,V}(x)$. Then
        \begin{equation}
            \label{eq:derivative-and-pullback-DSL-convexity-short}
             D^2 (u\circ \ell_{t_0,V})(x) = \ell_{{V}}^* D^2 u(t,x). 
        \end{equation}
    \end{claim}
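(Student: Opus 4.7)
The plan is a direct chain-rule calculation, exploiting the fact that $\ell_{t_0,V}$ is affine. First I would record that the Jacobian of $\ell_{t_0,V}$ is the constant $(n+1)\times n$ matrix $\begin{pmatrix} V^T \\ I \end{pmatrix}$, independent of both $x$ and $t_0$; here $V^T$ is the top (time) row coming from differentiating the $t$-component, and $I$ is the lower $n\times n$ identity block coming from the $x$-components. This already explains why the right-hand side of \eqref{eq:derivative-and-pullback-DSL-convexity-short} depends only on $V$ and not on $t_0$: the translation $t_0$ contributes nothing to any derivative.

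Second, applying the chain rule gives $\partial_j(u\circ\ell_{t_0,V})(x) = \sum_\alpha (\partial_\alpha u)(\ell_{t_0,V}(x))\,(D\ell_{t_0,V})_{\alpha j}$, with $\alpha$ indexing the $n+1$ coordinates on the target. Differentiating a second time and using that $D^2\ell_{t_0,V}\equiv 0$ (since $\ell_{t_0,V}$ is affine) to discard the second-order correction yields the matrix identity
\[ D^2(u\circ\ell_{t_0,V})(x) = (D\ell_{t_0,V})^T\, D^2u(\ell_{t_0,V}(x))\,(D\ell_{t_0,V}). \]

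Third, I would recognize the right-hand side as $\ell_V^* D^2u(t,x)$: substituting $(D\ell_{t_0,V})^T = \begin{pmatrix} V & I \end{pmatrix}$ and comparing with the explicit formula \eqref{eq:pull-back-under-ell-DSL-convexity-short-DSL} defining $\ell_V^*$ produces exactly \eqref{eq:derivative-and-pullback-DSL-convexity-short}. There is no real obstacle; the content of the claim is the routine but convenient observation that the algebraic sandwich operation \eqref{eq:pull-back-under-ell-DSL-convexity-short-DSL} coincides with the geometric pullback of a Hessian under an affine map, which holds precisely because affine maps contribute no second-order terms to the chain rule.
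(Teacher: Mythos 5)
Your proof is correct and takes essentially the same approach as the paper: a direct chain-rule computation of the Hessian of a composition with an affine map, followed by matching against the definition of $\ell_V^*$ in \eqref{eq:pull-back-under-ell-DSL-convexity-short-DSL}. The paper writes the computation out in explicit coordinate derivatives before assembling the matrix identity, while you phrase it compactly via the Jacobian $D\ell_{t_0,V} = \begin{pmatrix} V^T \\ I \end{pmatrix}$ and the vanishing of $D^2\ell_{t_0,V}$, but these are the same computation.
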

    \begin{proof}
        Since $u\circ \ell_{t_0,V}(x) = u(t_0 + {V}^T x, x)$, in coordinates $t, x_1,\dots, x_n$ on $\RR\times \RR^n$,
    \begin{align*}
        \partial_{x_i} (u\circ \ell_{t_0,V})(x) =&\ \frac{\partial u}{\partial t}(t_0 + {V}^T x, x) \, {V}_i  + \partial_{x_i} u (t_0 + {V} \cdot x, x),\\
        \partial_{x_i}\partial_{x_j} (u\circ \ell_{t_0,V})(x) =&\ \left( {\partial^2_t u}\, {V}_i {V}_j + {\partial_{x_j} \partial_t u}\, {V}_i +  {\partial_t \partial_{x_i} u}\, {V}_j +  \partial_{x_i} \partial_{x_j} u \right)(t,x) 
    \end{align*}
    Consequently as a matrix equation,
    \begin{align*}    
        D^2 (u\circ \ell_{t_0,V})(x) =&\  \left( \partial_t^2 u {V} {V}^T + (\partial_tD_x u)  {V}^T + {V} (\partial_tD_x u)^T + D_x^2 u\right) (t,x) \\
                    =&\ \ell_{{V}}^* D^2 u(t,x) . \tag*{\qedhere}
    \end{align*}
    \end{proof}

\section{Convexity of subsolutions in the top two branches}
\label{sec:convexity-of-subsolutions-inthe-top-two-branches}

The goal of this section is to prove Theorem~\ref{theorem:affine-slices-of-DSL-are-SLag-DSL-convexity-short}. By Theorem~\ref{thm:restriction-theorem-DSL-convexity-short}, it suffices to prove Theorem~\ref{theorem:affine-slices-of-DSL-are-SLag-DSL-convexity-short} when $u \in C^2(\dcal)$. In this case Theorem \ref{theorem:affine-slices-of-DSL-are-SLag-DSL-convexity-short} reduces to a linear algebra statement about the Hessian $D^2u$, which is discussed in \S \ref{subsec:affine-slices-of-DSL} and \S\ref{subsec:time-covexity}. Theorem \ref{theorem:affine-slices-of-DSL-are-SLag-DSL-convexity-short} is proved in \S \ref{subsec:proof-of-theorem-1.1-(1)-(2)}.

\subsection{Affine slices of DSL subequations}
\label{subsec:affine-slices-of-DSL}

The proof of Theorem~\ref{theorem:affine-slices-of-DSL-are-SLag-DSL-convexity-short}(i) relies on an observation that affine sections of DSL subsolutions are SL subsolutions, i.e., if $u\in \fcal_c(\dcal)$ for $\dcal = (0,1) \times D$, then the affine slice $u\circ \ell_{t_0,V}$ is an $F_{c-\frac{\pi}{2}}$-subsolution on its domain of definition, where $\ell_{t_0,V}$ is as in \eqref{eq:pull-back-under-ell-DSL-convexity-short-DSL} (Corollary~\ref{cor:affine-slices-of-DSL-are-SL-DSL-convexity-short}). A nice consequence of this observation is that subsolutions to the second branch of the DSL are subharmonic along $2$-dimensional planes not containing time-like lines of the form $\RR\times \{x_0\}$ (Corollary~\ref{cor:2-convex-second-branch-DSL-convexity-short}). This is optimal by Example~\ref{example:second-branch-DSL-convexity-short}. If $u\in C^2(\dcal) \cap \fcal_c(\dcal)$ then Corollary~\ref{cor:affine-slices-of-DSL-are-SL-DSL-convexity-short} reduces to the following linear algebra observation about the Hessian $D^2u$ (compare with Claim~\ref{claim:derivative-and-pullback-DSL-convexity-short}). 
 
\begin{proposition}
\label{prop:affine-slices-of-DSL-matrix-is-SLag-DSL-convexity-short}
    Fix a matrix $A \in \Sym^2(\RR^{n+1})$. Recall the lifted Lagrangian angle $\widetilde{\theta}$ \eqref{eq:lifted-lag-angle-DSL-convexity-short}, lifted space-time Lagrangian angle $\widetilde{\Theta}$ \eqref{eq:lifted-spacetime-lag-angle-DSL-convexity-short}, and the pullback $\ell_{V}^*$ \eqref{eq:pull-back-under-ell-DSL-convexity-short-DSL}. Then for any $c\in  \left(-\frac{n+1}{2}\pi, \frac{n+1}2\pi \right)$ and ${V} \in \RR^{n\times 1}$,
    \begin{equation}
        \label{eq:affine-slices-result-DSL-convexity-short}
        \widetilde{\Theta}(A) \ge c \quad \implies \quad \widetilde{\theta} (\ell_{{V}}^*A) \ge c  - \frac{\pi}{2}.
    \end{equation}
    In other words, $A \in \fcal_c$ implies $\ell_V^*A \in F_{c-\frac{\pi}{2}}$ for all $V\in \RR^{n\times 1}$.
\end{proposition}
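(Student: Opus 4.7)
The proposition is a purely linear-algebraic assertion about $A$ and its pullback $\ell_V^*A$, so no viscosity theory is needed. First I would dispose of the singular case $A \in \scal$: then $a_{00} = 0$ and $\vec a = 0$, so formula \eqref{eq:pull-back-under-ell-DSL-convexity-short-DSL} collapses to $\ell_V^*A = A^+$, while Definition~\ref{def:spacetimelagrangianangle-DSL-convexity-short} gives $\widetilde{\Theta}(A) = \widetilde{\theta}(A^+) + \tfrac{\pi}{2}$, so \eqref{eq:affine-slices-result-DSL-convexity-short} holds with equality. It remains to treat $A \notin \scal$.

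For such $A$, the key is a congruence by the unimodular matrix
\[
 P := \begin{pmatrix} 1 & 0 \\ V & I_n \end{pmatrix}, \qquad \det P = 1.
\]
A direct block computation shows $P I_n P^T = I_n$ (because $I_n = \diag(0,1,\ldots,1)$ has a vanishing first row/column) and
\[
 P A P^T = \begin{pmatrix} a_{00} & (a_{00}V + \vec a)^T \\ a_{00}V + \vec a & \ell_V^*A \end{pmatrix}.
\]
Hence $P(I_n + \sqrt{-1}A)P^T = I_n + \sqrt{-1}\,P A P^T$ has the same determinant as $I_n + \sqrt{-1}A$. Since $\ell_V^*A$ is real symmetric, $I + \sqrt{-1}\ell_V^*A$ is invertible, and the Schur-complement formula applied to the bottom-right block yields the factorization
\[
 \det(I_n + \sqrt{-1}A) \;=\; \det(I + \sqrt{-1}\ell_V^*A) \cdot w(A,V),
\]
with
\[
 w(A,V) := \sqrt{-1}\, a_{00} + (a_{00}V + \vec a)^T\bigl(I + \sqrt{-1}\ell_V^*A\bigr)^{-1}(a_{00}V + \vec a).
\]
Orthogonally diagonalizing $\ell_V^*A = U^T\diag(\mu_j)U$ and setting $c := U(a_{00}V + \vec a)$ expresses $\Re w$ as the nonnegative sum $\sum_j c_j^2/(1+\mu_j^2)$. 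A short case analysis shows this vanishes together with $\Im w$ precisely on $\scal$, so $w(A,V) \neq 0$ with $\Re w(A,V)\ge 0$ on the complement of $\scal$, whence $\arg w(A,V)\in[-\tfrac{\pi}{2},\tfrac{\pi}{2}]$.

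The remaining and most delicate step is to upgrade the resulting determinantal identity
\[
 \arg\det(I_n + \sqrt{-1}A) \equiv \widetilde{\theta}(\ell_V^*A) + \arg w(A,V) \pmod{2\pi}
\]
to one for the lifted angle $\widetilde{\Theta}(A) = \sum_i \arg\lambda_i(I_n + \sqrt{-1}A)$, which is defined as a sum of principal-branch arguments and may differ from $\arg\det$ by an unknown multiple of $2\pi$. Since $\scal$ has real codimension $n+1\ge 2$ in $\Sym^2(\RR^{n+1})$, its complement is path-connected, and on it $\widetilde{\Theta}$ and $\widetilde{\theta}\circ\ell_V^*$ are continuous by Lemmas~\ref{lemma:Rub-Sol-Theta-well-def-DSL-convexity-short} and \ref{lemma:Rub-Sol-continuous-DSL-convexity-short}. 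Thus
\[
 k(A,V) := \tfrac{1}{2\pi}\bigl(\widetilde{\Theta}(A) - \widetilde{\theta}(\ell_V^*A) - \arg w(A,V)\bigr)
\]
is a continuous $\ZZ$-valued function on a connected open set, hence constant. Evaluating at the basepoint $A_0 := \diag(1,0,\ldots,0)$ I would compute $\widetilde{\Theta}(A_0) = \tfrac{\pi}{2}$, $\widetilde{\theta}(\ell_V^*A_0) = \tan^{-1}|V|^2$, and $\arg w(A_0,V) = \tan^{-1}(1/|V|^2)$ (with the $V=0$ case a direct check); the elementary identity $\tan^{-1}x + \tan^{-1}(1/x) = \tfrac{\pi}{2}$ for $x>0$ then forces $k\equiv 0$. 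Combining everything, $\widetilde{\Theta}(A) - \widetilde{\theta}(\ell_V^*A) = \arg w(A,V) \le \tfrac{\pi}{2}$, which is \eqref{eq:affine-slices-result-DSL-convexity-short}. The bookkeeping of the $2\pi$-lift is the only real obstacle; the rest is bare-hands computation.
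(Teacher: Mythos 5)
Your proof is correct, and it takes a genuinely more self-contained route than the paper's. Both arguments exploit the same two structural facts — the determinant of $I_n+\sqrt{-1}A$ is invariant under the unimodular congruence by $P = \begin{pmatrix}1&0\\V&I\end{pmatrix}$ (since $PI_nP^T=I_n$ and $\det P =1$), and a continuity argument is needed to upgrade a $\bmod\,2\pi$ identity to an identity of lifted angles. The difference is in how the remaining pieces are assembled. The paper black-boxes the formula $\widetilde{\Theta}(A)-\widetilde{\theta}(A^+)=\arg\bigl(\sqrt{-1}a_{00}+\vec a^T(I+\sqrt{-1}A^+)^{-1}\vec a\bigr)$ as Claim~\ref{claim:lag-ang-ST-lag-ang-formula-DSL-convexity-short} (quoted from Rubinstein--Solomon), applies it to $A_V$, and separately proves $\widetilde\Theta(A)=\widetilde\Theta(A_V)$ (Lemma~\ref{lemma:spectrum-of-AGamma-convexity-note}) by a one-parameter continuity argument in $V$ with basepoint $V=0$. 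You instead re-derive the formula from scratch via the Schur complement of the bottom-right block, observe $\Re\,w\ge0$ and $w\ne 0$ off $\scal$ so $\arg w\in[-\pi/2,\pi/2]$, and run the continuity argument over the full complement of $\scal$ (connected since $\codim\scal=n+1\ge2$), nailing down the integer lift at a single matrix basepoint $A_0=\diag(1,0,\dots,0)$ rather than by appeal to the $V=0$ formula. This buys you independence from the Rubinstein--Solomon citation and merges the paper's two lemmas into one clean computation; the cost is that you must verify the basepoint by hand, which you do correctly using $\tan^{-1}x+\tan^{-1}(1/x)=\pi/2$. One small remark in your favor: the closed interval $[-\pi/2,\pi/2]$ you obtain for $\arg w$ is the sharp range (the endpoints are attained exactly when $a_{00}V+\vec a=0$ with $a_{00}\neq 0$), and the non-strict bound $\arg w\le\pi/2$ is precisely what the proposition needs.
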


An ingredient in the proof of Proposition~\ref{prop:affine-slices-of-DSL-matrix-is-SLag-DSL-convexity-short} and later in the proof of Lemma~\ref{lemma:time-convexity-DSL-convexity-short} is a useful formula:

\begin{claim}
\label{claim:lag-ang-ST-lag-ang-formula-DSL-convexity-short}
     {\rm\cite[Lemma~3.6, Lemma~3.7]{Rub-Sol-DSL-MR3620699}} Let $A \in \Sym^2(\RR^{n+1})$ and use the notation \eqref{eq:blockrepresentationofDSLmatrix-DSL-convexity-short}. Assume that $A\neq \diag(0,A^+)$. Then
    \begin{equation}
        \label{eq:lag-ang-ST-lag-ang-formula-DSL-convexity-short}
        \widetilde{\Theta}(A) - \widetilde{\theta}(A^+) = \arg \left( \sqrt{-1} a_{00} + \vec{a}^T (I + \sqrt{-1}A^+)^{-1} \vec{a}  \right) \in \left( -\frac{\pi}{2}, \frac{\pi}{2} \right),
    \end{equation}
    where $\arg: \CC \setminus (-\infty, 0] \rightarrow \left(-\pi, \pi \right)$ is the counter-clockwise angle to the positive x-axis.
\end{claim}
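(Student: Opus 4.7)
The plan is to obtain the formula as the argument of a determinant identity coming from the Schur complement, and then fix the branch of the argument by a connectedness argument on $\Sym^2(\RR^{n+1})\setminus\scal$.

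\emph{Step 1 (Schur complement).} First I would write $I_n + \sqrt{-1}A$ in the block form from \eqref{eq:blockrepresentationofDSLmatrix-DSL-convexity-short}, with top-left scalar $\sqrt{-1}a_{00}$ and bottom-right block $I + \sqrt{-1}A^+$. The latter has eigenvalues $1 + \sqrt{-1}\lambda_j(A^+)$, all with real part $1$, hence is invertible. The Schur complement formula then yields
\[
\det(I_n + \sqrt{-1}A) \;=\; \det(I + \sqrt{-1}A^+)\cdot \bigl(\sqrt{-1}a_{00} + \vec{a}^T(I + \sqrt{-1}A^+)^{-1}\vec{a}\bigr),
\]
the plus sign of the Schur term coming from the identity $\sqrt{-1}^2 = -1$.

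\emph{Step 2 (Arguments modulo $2\pi$).} Both $\widetilde{\Theta}(A) = \sum_i \arg\lambda_i(I_n + \sqrt{-1}A)$ and $\widetilde{\theta}(A^+) = \sum_j \arg(1 + \sqrt{-1}\lambda_j(A^+))$ are arguments of the corresponding determinants modulo $2\pi$, so taking $\arg$ of the identity from Step 1 gives
\[
\widetilde{\Theta}(A) - \widetilde{\theta}(A^+) \;\equiv\; \arg\bigl(\sqrt{-1}a_{00} + \vec{a}^T(I + \sqrt{-1}A^+)^{-1}\vec{a}\bigr) \pmod{2\pi}.
\]
Diagonalizing $A^+ = U\Lambda U^T$ shows $\vec{a}^T(I + \sqrt{-1}A^+)^{-1}\vec{a} = \sum_j (U^T\vec{a})_j^2(1-\sqrt{-1}\lambda_j)/(1+\lambda_j^2)$, which has non-negative real part. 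Hence the scalar on the right lies in the closed right half-plane, is non-zero whenever $A \notin \scal$, and its $\arg$ lies in $[-\pi/2, \pi/2]$ (strictly in the open interval when $\vec{a} \neq 0$).

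\emph{Step 3 (Fix the branch by connectedness).} Both sides of the displayed identity are continuous on $\Sym^2(\RR^{n+1})\setminus\scal$: the LHS by Lemma~\ref{lemma:Rub-Sol-continuous-DSL-convexity-short}, and the RHS because the scalar never hits $(-\infty, 0]$ on this set. Since $\scal$ has codimension $n+1 \geq 2$, its complement in $\Sym^2(\RR^{n+1})$ is connected, so the integer discrepancy between the two continuous sides is a constant multiple of $2\pi$. To identify the constant, I would evaluate at $A = \diag(1, 0, \ldots, 0)$: then $a_{00} = 1$, $\vec{a} = 0$, $A^+ = 0$, so $I_n + \sqrt{-1}A = \diag(\sqrt{-1},1,\ldots,1)$ gives $\widetilde{\Theta}(A) = \pi/2$, while $\widetilde{\theta}(A^+) = 0$ and the scalar equals $\sqrt{-1}$ with $\arg = \pi/2$. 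The discrepancy is zero, so the formula holds on the full connected set.

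The main obstacle is the branch ambiguity in Step 2: the determinant identity only yields the formula modulo $2\pi$, and ruling out non-zero integer multiples requires a global topological argument. The key structural input is the codimension bound $\codim \scal = n+1 \geq 2$, which ensures that a single test-point evaluation pins down the branch globally. (The strict inclusion in $(-\pi/2, \pi/2)$ claimed in the statement requires $\vec{a} \neq 0$; the edge case $\vec{a} = 0$, $a_{00} \neq 0$ produces $\pm \pi/2$ and the formula should be read with the closed interval there.)
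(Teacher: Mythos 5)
The paper does not supply its own proof of this claim; it is quoted verbatim from Rubinstein--Solomon \cite[Lemma~3.6, Lemma~3.7]{Rub-Sol-DSL-MR3620699}, so there is no internal argument to compare against. Your proof is correct and is the natural route: the Schur-complement factorization of $\det(I_n + \sqrt{-1}A)$ gives the identity modulo $2\pi$, diagonalizing $A^+$ shows the scalar lies in the closed right half-plane and is nonzero off $\scal$, and the connectedness of $\Sym^2(\RR^{n+1})\setminus\scal$ (a linear subspace of codimension $n+1\ge 2$ has connected complement) together with a single test-point evaluation at $A=\diag(1,0,\dots,0)$ kills the $2\pi\ZZ$ ambiguity. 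You are also right to flag the edge case: when $\vec a=0$ and $a_{00}\neq 0$ the right-hand side is $\arg(\sqrt{-1}a_{00})=\pm\pi/2$, so the strict inclusion in $\left(-\frac{\pi}{2},\frac{\pi}{2}\right)$ really requires $\vec a\neq 0$ and should otherwise be read with the closed interval. This does not affect any downstream application in the paper --- indeed the proofs of Lemma~\ref{lemma:time-convexity-DSL-convexity-short} (Case 1) and Proposition~\ref{prop:DSL-top-two-branches-are-products-convexity-short-note} (Step (ii)) apply the formula precisely in the $\vec a = 0$ regime where the argument equals $\pm\pi/2$, and only the identity and the non-strict bound $|\widetilde{\Theta}(A)-\widetilde{\theta}(A^+)|\le\pi/2$ are ever used.
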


\begin{proof}[Proof of Proposition~\ref{prop:affine-slices-of-DSL-matrix-is-SLag-DSL-convexity-short}]
First observe that if $A= \diag(0,A^+)$, then $\ell_{V}^* A = A^+$ and \eqref{eq:affine-slices-result-DSL-convexity-short} follows by definition of $\widetilde{\Theta}$ \eqref{eq:lifted-spacetime-lag-angle-DSL-convexity-short}. So assume that $A\notin \scal$, and note that \eqref{eq:lag-ang-ST-lag-ang-formula-DSL-convexity-short} applies. There are two cases to consider.

\medskip
\noindent
\textit{Case (i):} ${V} = 0$. 
Since $\ell_{0}^* A = A^+$, by \eqref{eq:lag-ang-ST-lag-ang-formula-DSL-convexity-short},
    \[\widetilde{\Theta}(A) - \widetilde{\theta}(A^+) = \arg \left( \sqrt{-1} a_{00} + \vec{a}^T (I + \sqrt{-1}A^+)^{-1} \vec{a}  \right) \in \left( - \frac{\pi}{2}, \frac{\pi}{2} \right).\]
\medskip
\noindent
    \textit{Case (ii):} ${V} \neq 0$. The idea is to
    consider $A$ as a $(2,0)$-tensor and pull it back  under the map $(t,x)\mapsto \ell_{t,V}(x)$. It turns out that thanks to
    Lemma~\ref{lemma:spectrum-of-AGamma-convexity-note} below, Case (i) can be applied to this pullback. For $A\in \Sym^2(\RR^{n+1})$ and ${V} \in \RR^{n\times 1}$ let
    \begin{equation}
        \label{eq:eq6-DSL-convexity-short}
        A_{V} := \begin{pmatrix}
        1 & 0 \\
        {V} & I
    \end{pmatrix} A \begin{pmatrix}
        1 & {V}^T \\
        0 & I
    \end{pmatrix}\in \Sym^2(\RR^{n+1}).
    \end{equation}
By Lemma~\ref{lemma:pull-back-and-change-of-basis-DSL-convexity-short} below, $(A_{{V}})^+ = \ell_{{V}}^*A$.
    So applying Case (i) to $A_{V}$ (think $A_V=(A_V)_0$),
    $$
\widetilde{\Theta}(A_V) - \widetilde{\theta}(\ell_{{V}}^*A)
=
\widetilde{\Theta}(A_V) - \widetilde{\theta}(A_V^+) \in \left( - \frac{\pi}{2}, \frac{\pi}{2} \right).
    $$
Thus, by Lemma~\ref{lemma:spectrum-of-AGamma-convexity-note} below,
$$
\widetilde{\Theta}(A) - \widetilde{\theta}(\ell_{{V}}^*A)
\in \left( - \frac{\pi}{2}, \frac{\pi}{2} \right),
    $$
and the assumption $A \in \fcal_c$ yields
    \[\widetilde{\theta}(\ell_{V}^* A) = \widetilde{\theta}(A_{V}^+)  \ge  \widetilde{\Theta}(A_{{V}}) - \frac{\pi}{2} \ge c-\frac{\pi}{2}.\]
    This completes the proof of Case (ii) and hence of 
    Proposition~\ref{prop:affine-slices-of-DSL-matrix-is-SLag-DSL-convexity-short}.
\end{proof}

Next, we establish the two lemmas used in the proof of 
Proposition~\ref{prop:affine-slices-of-DSL-matrix-is-SLag-DSL-convexity-short}.
\begin{lemma}
\label{lemma:pull-back-and-change-of-basis-DSL-convexity-short}
    Fix $A\in \Sym^2(\RR^{n+1})$ and $V\in \RR^{n\times1}$ and let $A_V$ be as in \eqref{eq:eq6-DSL-convexity-short}. Then
    \begin{equation}
    \label{eq:eq7-DSL-convexity-short}
        A_V = \begin{pmatrix}
        a_{00} & a_{00}{V}^T + \vec{a}^T \\
        a_{00}{V} + \vec{a} &   a_{00}{V} {V}^T + \vec{a} {V}^T + {V} \vec{a}^T + A^+
    \end{pmatrix}.
    \end{equation}
    In particular using the block representation \eqref{eq:blockrepresentationofDSLmatrix-DSL-convexity-short} and the pullback \eqref{eq:pull-back-under-ell-DSL-convexity-short-DSL}, $(A_V)^+ = \ell_{V}^*A$.
\end{lemma}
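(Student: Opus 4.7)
The statement is purely a linear algebra identity, so the plan is simply to carry out the block matrix multiplication defining $A_V$ in \eqref{eq:eq6-DSL-convexity-short} and read off both conclusions. Using the block representation \eqref{eq:blockrepresentationofDSLmatrix-DSL-convexity-short} for $A$, I would first multiply the right two factors:
\[
\begin{pmatrix} a_{00} & \vec{a}^T \\ \vec{a} & A^+ \end{pmatrix}\begin{pmatrix} 1 & V^T \\ 0 & I \end{pmatrix} = \begin{pmatrix} a_{00} & a_{00} V^T + \vec{a}^T \\ \vec{a} & \vec{a}\, V^T + A^+ \end{pmatrix},
\]
and then left-multiply by $\begin{pmatrix} 1 & 0 \\ V & I \end{pmatrix}$, which adds $V$ times the top row to the bottom row. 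This produces precisely the right-hand side of \eqref{eq:eq7-DSL-convexity-short}, with top-right and bottom-left entries $a_{00}V^T + \vec{a}^T$ and $a_{00}V + \vec{a}$, and with bottom-right block $a_{00}VV^T + V\vec{a}^T + \vec{a}V^T + A^+$.

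For the final assertion, I would then simply compare the lower-right $n\times n$ block of \eqref{eq:eq7-DSL-convexity-short}, namely $(A_V)^+ = a_{00}VV^T + V\vec{a}^T + \vec{a}V^T + A^+$, with the explicit formula for the pullback in \eqref{eq:pull-back-under-ell-DSL-convexity-short-DSL}. They agree term-for-term, giving $(A_V)^+ = \ell_V^* A$.

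There is no real obstacle: the proof is just a $2\times 2$ block matrix multiplication together with a definitional comparison. The only thing worth emphasizing, conceptually, is why this identity is useful downstream, namely that the symmetric matrix $A_V$ realizes the pullback $\ell_V^* A$ as its lower-right block while keeping $a_{00}$ as its top-left entry unchanged; this is the structural fact exploited in Lemma~\ref{lemma:spectrum-of-AGamma-convexity-note} and in Case~(ii) of the proof of Proposition~\ref{prop:affine-slices-of-DSL-matrix-is-SLag-DSL-convexity-short}.
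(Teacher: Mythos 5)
Your proof is correct and is essentially the same as the paper's: both just carry out the $2\times 2$ block matrix multiplication defining $A_V$ and then compare the lower-right block to the pullback formula \eqref{eq:pull-back-under-ell-DSL-convexity-short-DSL}. Breaking the product into two steps is a harmless presentational variation.
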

\begin{proof}
    Under the block representation \eqref{eq:blockrepresentationofDSLmatrix-DSL-convexity-short}, by \eqref{eq:pull-back-under-ell-DSL-convexity-short-DSL}
    \begin{align*}
       A_{{V}} :=  \begin{pmatrix}
        1 & 0 \\
        {V} & I
    \end{pmatrix} A \begin{pmatrix}
        1 & {V}^T \\
        0 & I
    \end{pmatrix} =&\ \begin{pmatrix}
        1 & 0 \\
        {V} & I
    \end{pmatrix} \begin{pmatrix}
        a_{00} & \vec{a}^T \\
        \vec{a} & A^+
    \end{pmatrix} \begin{pmatrix}
        1 & {V}^T \\
        0 & I
    \end{pmatrix} \\
    =&\ \begin{pmatrix}
        a_{00} & a_{00}{V}^T + \vec{a}^T \\
        a_{00}{V} + \vec{a} &   a_{00}{V} {V}^T + \vec{a} {V}^T + {V} \vec{a}^T + A^+
    \end{pmatrix} \tag*{\qedhere}.
    \end{align*}
\end{proof}

    \begin{lemma}
    \label{lemma:spectrum-of-AGamma-convexity-note}
        For $A\in \Sym^2(\RR^{n+1})$ and ${V} \in \RR^{n\times 1}$, $\widetilde{\Theta}(A) = \widetilde{\Theta}(A_{{V}})$ (recall (\ref{eq:eq6-DSL-convexity-short})).
    \end{lemma}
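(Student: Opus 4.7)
The plan is to reduce the claim to a statement about determinants using the simple but crucial fact that the matrix $M := \begin{pmatrix}1 & V^T \\ 0 & I\end{pmatrix}$ satisfies $M^T I_n M = I_n$. Indeed, $I_n = \diag(0,1,\dots,1) = \begin{pmatrix}0 & 0 \\ 0 & I\end{pmatrix}$ annihilates the first column of $M$, so a direct block computation gives $M^T I_n M = I_n$. Since $A_V = M^T A M$ by definition \eqref{eq:eq6-DSL-convexity-short}, this yields the identity
\[
I_n + \sqrt{-1}\,A_V \;=\; M^T\bigl(I_n + \sqrt{-1}\,A\bigr) M.
\]
Because $M$ is upper-triangular with $1$'s on the diagonal, $\det M = 1$, so
$\det(I_n + \sqrt{-1}\,A_V) = \det(I_n + \sqrt{-1}\,A)$.
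Hence the products of the eigenvalues of $I_n + \sqrt{-1}\,A_V$ and of $I_n + \sqrt{-1}\,A$ agree; taking arguments,
$\widetilde{\Theta}(A_V) \equiv \widetilde{\Theta}(A) \pmod{2\pi}$
whenever both sides are defined via \eqref{eq:lifted-spacetime-lag-angle-DSL-convexity-short}.

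To upgrade congruence mod $2\pi$ to equality, I would use a continuity argument along the path $s \mapsto A_{sV}$, $s\in[0,1]$. First handle the degenerate case $A \in \scal$ separately: if $a_{00}=0$ and $\vec{a}=0$, formula \eqref{eq:eq7-DSL-convexity-short} immediately gives $A_{sV} = A$ for every $s$, so the equality is trivial. Now assume $A \notin \scal$. I would check using \eqref{eq:eq7-DSL-convexity-short} that the entire path stays outside $\scal$: the top-left entry of $A_{sV}$ is $a_{00}$ (constant in $s$), and the top-right block is $a_{00}(sV)^T + \vec{a}^T$, so $A_{sV}\in\scal$ forces $a_{00}=0$ and (since then the top-right block equals $\vec{a}^T$) $\vec{a}=0$, contradicting $A\notin\scal$.

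Along this path, Lemma~\ref{lemma:Rub-Sol-continuous-DSL-convexity-short} guarantees that $s \mapsto \widetilde{\Theta}(A_{sV})$ is continuous on $[0,1]$. The determinant computation above applies to each $s$ (replacing $M$ by $M_s := \begin{pmatrix}1 & sV^T \\ 0 & I\end{pmatrix}$), so
\[
\widetilde{\Theta}(A_{sV}) - \widetilde{\Theta}(A) \;\in\; 2\pi\,\ZZ \qquad \text{for every } s\in[0,1].
\]
A continuous function $[0,1] \to 2\pi\,\ZZ$ taking the value $0$ at $s=0$ must be identically $0$, so $\widetilde{\Theta}(A_V) = \widetilde{\Theta}(A)$, as desired.

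The only delicate point is the passage from congruence mod $2\pi$ to equality, since the sum of arguments can in principle lie in an interval of length $(n+1)\pi$; this is exactly what the continuity plus connectedness of the path accomplishes, once one has verified that the deformation $A_{sV}$ does not cross the singular locus $\scal$ on which $\widetilde{\Theta}$ may jump.
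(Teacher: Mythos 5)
Your proof is correct and follows essentially the same route as the paper: you conjugate $I_n + \sqrt{-1}A$ by $M$ using $M^T I_n M = I_n$, deduce determinant equality (hence congruence of $\widetilde\Theta$ mod $2\pi$), verify that the deformation stays off the singular locus $\scal$ exactly as the paper does in its Claim~\ref{claim:image-of-AV-DSL-convexity-short}, and upgrade to equality by continuity and discreteness. The only cosmetic difference is that you parametrize a segment $s\mapsto A_{sV}$ while the paper argues directly with $V\mapsto\widetilde\Theta(A_V)$ over all of $\RR^{n\times 1}$; both invocations of connectedness are equivalent here.
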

    \begin{proof}
    First suppose that $A\in \scal$, i.e., $A = \diag(0,A^+)$. By \eqref{eq:pull-back-under-ell-DSL-convexity-short-DSL}, $\ell_{V}^*A = A^+$ and $A_V = A$ and so $\widetilde{\Theta}(A_V)= \widetilde{\Theta}(A)$. So assume without loss of generality that $A\notin \scal$.
    \begin{claim}
    \label{claim:image-of-AV-DSL-convexity-short}
        If $A\notin \scal$, then $A_V \notin \scal$ for all $V\in \RR^{n\times 1}$.
    \end{claim}
    \begin{proof}
    If $a_{00} \neq 0$ then by \eqref{eq:eq6-DSL-convexity-short} $A_V \notin \scal$ for all $V\in \RR^{n+1}$, since its $(1,1)$ entry is always $a_{00}$ and hence non-zero. Suppose $a_{00} = 0$. Then by assumption $\vec{a} \neq 0$, and
    \[A_V = \begin{pmatrix}
        0 & \vec{a}^T \\
        \vec{a} & \vec{a}V^T + V \vec{a}^T + A^+,
    \end{pmatrix} \notin \scal. \tag*{\qedhere}\]
    \end{proof}
     $A\notin \scal$ by assumption and by Claim~\ref{claim:image-of-AV-DSL-convexity-short} $A_V \notin \scal$ for all $V\in \RR^{n\times 1}$. Then by \eqref{eq:lifted-spacetime-lag-angle-DSL-convexity-short}
     \begin{equation}
         \label{eq:Theta-formula-special-case-DSL-convexity-short}
         \widetilde{\Theta}(A_V) \equiv_{2\pi} \arg \det(I_n + \sqrt{-1} A_V) \qquad \forall\ V\in \RR^{n\times1}.
     \end{equation}
    Recall $I_n = \diag(0,1,\dots, 1) \in \Sym^2(\RR^{n+1})$, and let $I$ denote the $n\times n$ identity matrix.
    \[(I_n)_{{V}} := \begin{pmatrix}
        1 & 0 \\
        {V} & I
    \end{pmatrix} I_n \begin{pmatrix}
        1 & {V}^T \\
        0 & I
    \end{pmatrix} = \begin{pmatrix}
        1 & 0 \\
        {V} & I
    \end{pmatrix} \begin{pmatrix}
        0 & 0 \\
        0 & I
    \end{pmatrix} \begin{pmatrix}
        1 & {V}^T \\
        0 & I
    \end{pmatrix} = \begin{pmatrix}
        0 & 0 \\
        0 & I
    \end{pmatrix} = I_n.\]
    Consequently,
    \[\left(I_n + \sqrt{-1} A \right)_V = \begin{pmatrix}
        1 & 0 \\
        {V}^T & I
    \end{pmatrix} \left(I_n + \sqrt{-1} A \right) \begin{pmatrix}
        1 & {V} \\
        0 & I
    \end{pmatrix} = I_n + \sqrt{-1} A_{{V}}. \]
    The determinant of a lower triangular matrix with $1$'s along the diagonal is $1$, so both sides of the previous equation have equal determinants. Thus
    \[
        \qquad \arg \det(I_n + \sqrt{-1} A_{V}) \equiv_{2\pi} \arg \det(I_n + \sqrt{-1} A).
    \]
    Conclude by \eqref{eq:Theta-formula-special-case-DSL-convexity-short},
    \[\begin{aligned}
        \widetilde{\Theta}(A) \equiv_{2\pi} \arg \det(I_n + \sqrt{-1} A)\equiv_{2\pi}&\ \arg \det(I_n + \sqrt{-1} A_{V}) \equiv_{2\pi} \widetilde{\Theta}(A_{V}),
    \end{aligned}\]
    \[\widetilde{\Theta}(A) -  \widetilde{\Theta}(A_{V} ) \in 2\pi \ZZ \qquad \forall\ V\in \RR^{n\times 1}.\]
    But $\widetilde{\Theta}$ is continuous on the set $\{A_V: V\in \RR^{n+1}\} \subset \Sym^2(\RR^{n+1})$ by Claim~\ref{claim:image-of-AV-DSL-convexity-short} and Lemma~\ref{lemma:Rub-Sol-continuous-DSL-convexity-short}. So ${V} \mapsto \widetilde{\Theta}(A_{{V}}) - \widetilde{\Theta}(A)$ is a continuous real valued map taking values in $2\pi \ZZ$ which is zero when $v=0$. Conclude that
    \begin{equation}
        \widetilde{\Theta}(A) =  \widetilde{\Theta}(A_{V} ) \qquad \forall\ {V} \in \RR^{n\times 1}. \tag*{\qedhere}
    \end{equation}
    \end{proof}


\begin{corollary}
    \label{cor:affine-slices-of-DSL-are-SL-DSL-convexity-short}
    Fix a domain $\dcal\subset \RR^{n+1}$ and a branch $|c| \le \frac{n+1}2\pi$. Pick any $t_0\in \RR$ and $V\in \RR^{n+1}$, and consider the pullback domain $D_{t_0,V} := \ell_{t_0, V}^{-1}(\dcal)\cap D$ where $\ell_{t_0,V}$ is as in \eqref{eq:pull-back-under-ell-DSL-convexity-short-DSL}. If $u\in \fcal_c(\dcal)$, then $u\circ\ell_{t_0,V} \in F_{c-\frac{\pi}{2}}(D_{t_0,V})$.
\end{corollary}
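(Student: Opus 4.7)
The plan is to deduce this corollary directly from the matrix-level statement (Proposition~\ref{prop:affine-slices-of-DSL-matrix-is-SLag-DSL-convexity-short}) by invoking the Harvey--Lawson restriction theorem (Theorem~\ref{thm:restriction-theorem-DSL-convexity-short}). In other words, upgrading a pointwise Hessian implication to a viscosity-subsolution implication is precisely the job of the restriction theorem, and all the analytic content of the corollary is already contained in Proposition~\ref{prop:affine-slices-of-DSL-matrix-is-SLag-DSL-convexity-short}.

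More concretely, I would first note that $\ell_{t_0,V}:\RR^n\to\RR^{n+1}$ is a $C^2$ (in fact affine) embedding, so it restricts to a $C^2$ embedding of the open set $D_{t_0,V}=\ell_{t_0,V}^{-1}(\dcal)$ into $\dcal$. To apply Theorem~\ref{thm:restriction-theorem-DSL-convexity-short} with $F_2=\fcal_c\subset\Sym^2(\RR^{n+1})$ and $F_1=F_{c-\pi/2}\subset\Sym^2(\RR^n)$, I need to verify the pointwise hypothesis (i): for every $\varphi\in C^2(\dcal)$ and every $x\in D_{t_0,V}$, if $D^2\varphi(\ell_{t_0,V}(x))\in\fcal_c$, then $D^2(\varphi\circ\ell_{t_0,V})(x)\in F_{c-\pi/2}$. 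By Claim~\ref{claim:derivative-and-pullback-DSL-convexity-short}, the Hessian on the left-hand side equals $\ell_V^*D^2\varphi(\ell_{t_0,V}(x))$, and by Proposition~\ref{prop:affine-slices-of-DSL-matrix-is-SLag-DSL-convexity-short} applied to the matrix $A:=D^2\varphi(\ell_{t_0,V}(x))$, the hypothesis $A\in\fcal_c$ implies $\ell_V^*A\in F_{c-\pi/2}$. This gives exactly (i).

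Once hypothesis (i) is verified, Theorem~\ref{thm:restriction-theorem-DSL-convexity-short} immediately yields the equivalent statement (ii): for any $u\in\fcal_c(\dcal)$, the pullback $u\circ\ell_{t_0,V}$ lies in $F_{c-\pi/2}(D_{t_0,V})$. This is precisely the conclusion of the corollary.

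There is essentially no obstacle here: the whole point of having proved Proposition~\ref{prop:affine-slices-of-DSL-matrix-is-SLag-DSL-convexity-short} (which is the genuinely nontrivial linear-algebraic step, using Claim~\ref{claim:lag-ang-ST-lag-ang-formula-DSL-convexity-short} together with Lemmas~\ref{lemma:pull-back-and-change-of-basis-DSL-convexity-short} and \ref{lemma:spectrum-of-AGamma-convexity-note}) is to reduce this corollary to a turnkey application of the restriction theorem. The only small bookkeeping point is to confirm that the pullback $D^2(\varphi\circ\ell_{t_0,V})$ indeed takes the form $\ell_V^*D^2\varphi$ used in Proposition~\ref{prop:affine-slices-of-DSL-matrix-is-SLag-DSL-convexity-short}---i.e., that the pullback is governed by the derivative $\ell_V$ and is independent of the translation parameter $t_0$---which is exactly what Claim~\ref{claim:derivative-and-pullback-DSL-convexity-short} records.
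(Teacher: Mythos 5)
Your proof is correct and follows essentially the same route as the paper's: both reduce the pointwise Hessian condition via Claim~\ref{claim:derivative-and-pullback-DSL-convexity-short}, apply Proposition~\ref{prop:affine-slices-of-DSL-matrix-is-SLag-DSL-convexity-short} for the matrix-level implication, and invoke the Harvey--Lawson restriction theorem (Theorem~\ref{thm:restriction-theorem-DSL-convexity-short}) to pass from $C^2$ test functions to general upper semicontinuous subsolutions. The paper phrases this as ``check the $C^2$ case first, then apply the restriction theorem,'' which is exactly verifying hypothesis (i) as you do.
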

\begin{proof}
    First suppose that $u\in C^2(\dcal)\cap \fcal_c(\dcal)$. Consider a point $x\in D$ such that $(t,x) := \ell_{t_0,V}(x) \in \dcal = (1,0)\times D$. Then $D^2(u\circ  \ell_{t_0,V})(x) = \ell_{V}^*D^2u(t,x)$ by \eqref{eq:derivative-and-pullback-DSL-convexity-short}. Since $D^2u(t,x) \in \fcal_{c}$, $D^2(u\circ \ell_{t_0,V})(x) = \ell_{V}^*D^2u(t,x) \in F_{c-\frac{\pi}{2}}$ by Proposition~\ref{prop:affine-slices-of-DSL-matrix-is-SLag-DSL-convexity-short}.
    
    Corollary~\ref{cor:affine-slices-of-DSL-are-SL-DSL-convexity-short} then follows for general $u\in \fcal_c(\dcal)$ by Theorem~\ref{thm:restriction-theorem-DSL-convexity-short}.
\end{proof}

\begin{remark}
    By Definition~\ref{def:subsolution-DSL-convexity-short}, an upper semi-continuous function being an $F$-subsolution is a local property. So $u\circ \ell_{t_0,V}$ is $F_{c-\frac{\pi}{2}}$-subharmonic at $x\in D$ when ever $u$ is $\fcal_c$-subharmonic in a neighborhood $U$ of $\ell_{t_0,V}(x) \in \dcal$, by applying Theorem~\ref{thm:restriction-theorem-DSL-convexity-short} to the embedding $\ell_{t_0,V}$ restricted to a neighborhood of $x$ landing inside $U$.
\end{remark}

\begin{remark}
\label{remark:boundary-problems-DSL-convexity-short}
    If $\dcal := (0,1) \times D$ for $D\subset \RR^n$ as in Theorem~\ref{theorem:affine-slices-of-DSL-are-SLag-DSL-convexity-short}, then 
    \[ D_{t_0,0} := \ell_{t_0,0}^{-1}(\dcal) = \begin{cases}
         D \qquad &t_0\in (0,1),\\
        \varnothing \qquad &t_0 \in \RR \setminus (0,1).
    \end{cases}\]
    In particular by Corollary~\ref{cor:affine-slices-of-DSL-are-SL-DSL-convexity-short} applies to the special case $V=0$ to yield
    \begin{equation}
        \label{eq:cant-extend-to-bdry-DSL-convexity-short}
        u(t_0,\cdot) \in F_{c-\frac{\pi}{2}}(D) \qquad \forall\ t_0 \in (0,1),\ u\in \fcal_c(\dcal).
    \end{equation}
    The property \eqref{eq:cant-extend-to-bdry-DSL-convexity-short} was first observed in \cite[Lemma~5.1]{Rub-Sol-DSL-MR3620699}. Interestingly \eqref{eq:cant-extend-to-bdry-DSL-convexity-short} cannot be extended to $t\in \{0,1\}$ even if $u$ is continuous up to the boundary of $\dcal$ \cite[Remark~9.3]{Rub-Sol-DSL-MR3620699}.
\end{remark}

\begin{corollary}
\label{cor:2-convex-second-branch-DSL-convexity-short}
    Fix a domain $\dcal\subset \RR\times \RR^{n}$. Fix a branch $c \in \left[\frac{n-1}2\pi, \frac{n+1}2\pi \right)$. Let $H\subset \RR\times \RR^n$ be a 2-dim affine plane that does not contain lines of the form $\RR\times \{x_0\}$ for any $x_0\in \RR^n$. Then $u|_{\dcal\cap H}$ is subharmonic.
\end{corollary}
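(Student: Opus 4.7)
The plan is to write $H$ as a graph over a 2-dimensional spatial subspace and then chain Corollary~\ref{cor:affine-slices-of-DSL-are-SL-DSL-convexity-short} (affine slices lie in $F_{c-\pi/2}$), Lemma~\ref{lemma:properties-of-eigenvalues-DSL-convexity-short}(ii) (2-convexity in the top two branches of the SL), and the restriction theorem (Theorem~\ref{thm:restriction-theorem-DSL-convexity-short}).

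First I would parameterize $H$ as a graph. Since $H$ contains no line of the form $\RR\times\{x_0\}$, the time-axis direction $(1,0,\ldots,0)$ does not lie in the tangent plane of $H$, so the projection $\pi:\RR\times\RR^n\to\RR^n$ onto the spatial factor restricts to an injective affine map on $H$. Hence $H':=\pi(H)\subset\RR^n$ is a 2-dimensional affine subspace, and $H$ is the graph over $H'$ of an affine function $x\mapsto t_0+V^Tx$ for some $t_0\in\RR$ and some $V\in\RR^n$ extending the linear part (with $V=0$ suffices in the degenerate case where $H\subset\{t_0\}\times\RR^n$). Thus $H=\ell_{t_0,V}(H')$ with $\ell_{t_0,V}$ as in \eqref{eq:pull-back-under-ell-DSL-convexity-short-DSL}, and under the affine bijection $\ell_{t_0,V}|_{H'}:H'\to H$, the function $u|_H$ corresponds to $(u\circ\ell_{t_0,V})|_{H'}$.

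Next I would apply Corollary~\ref{cor:affine-slices-of-DSL-are-SL-DSL-convexity-short} to obtain $u\circ\ell_{t_0,V}\in F_{c-\pi/2}(D_{t_0,V})$ with $D_{t_0,V}:=\ell_{t_0,V}^{-1}(\dcal)$. Since $c\geq\frac{n-1}{2}\pi$ gives $c-\frac{\pi}{2}\geq\frac{n-2}{2}\pi$, Lemma~\ref{lemma:properties-of-eigenvalues-DSL-convexity-short}(ii) ensures that every matrix in $F_{c-\pi/2}$ has the sum of any two of its eigenvalues nonnegative; by the Courant--Fischer-type characterization, this is equivalent to saying that the restriction of any such matrix to every 2-dimensional linear subspace of $\RR^n$ lies in $\tcal_2$. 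Applying Theorem~\ref{thm:restriction-theorem-DSL-convexity-short} to the inclusion $H'\hookrightarrow\RR^n$ with subequations $\tcal_2$ and $F_{c-\pi/2}$ then yields $\tcal_2$-subharmonicity of $(u\circ\ell_{t_0,V})|_{H'\cap D_{t_0,V}}$, which under the identification with $u|_H$ is the desired subharmonicity of $u|_{\dcal\cap H}$.

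The main technical care lies in the graph parameterization (extending the linear functional from $T_{H'}$ to all of $\RR^n$ to obtain $V$, and cleanly handling the degenerate time-slice case via Remark~\ref{remark:boundary-problems-DSL-convexity-short}), and in checking that the 2-convexity of Lemma~\ref{lemma:properties-of-eigenvalues-DSL-convexity-short}(ii) is exactly the trace-nonnegativity condition on every 2-dimensional linear subspace needed to invoke Theorem~\ref{thm:restriction-theorem-DSL-convexity-short}. The local nature of viscosity subsolutions and the localized version of Corollary~\ref{cor:affine-slices-of-DSL-are-SL-DSL-convexity-short} handle any points where $\ell_{t_0,V}^{-1}(\dcal)$ fails to exhaust $H'$.
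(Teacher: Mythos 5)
Your proposal is correct and takes essentially the same approach as the paper: write $H$ as a graph over a spatial 2-plane so that $H$ lies in the image of some $\ell_{t_0,V}$ (this is exactly the content of Claim~\ref{claim:2-dim-plane-DSL-convexity-short}, which you re-derive inline), then chain Corollary~\ref{cor:affine-slices-of-DSL-are-SL-DSL-convexity-short}, Lemma~\ref{lemma:properties-of-eigenvalues-DSL-convexity-short}(ii), and Theorem~\ref{thm:restriction-theorem-DSL-convexity-short}. The only cosmetic difference is that you unfold the graph-parameterization argument instead of citing the claim; the invocation of Remark~\ref{remark:boundary-problems-DSL-convexity-short} for the $V=0$ case is unnecessary but harmless.
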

\begin{proof}
    By Claim~\ref{claim:2-dim-plane-DSL-convexity-short}, $H$ lies inside the image of $ \ell_{t_0,V}$ for some $V\in \RR^{n\times 1}$ (see \eqref{eq:pull-back-under-ell-DSL-convexity-short-DSL}). By Corollary~\ref{cor:affine-slices-of-DSL-are-SL-DSL-convexity-short}, $u\circ \ell_{t_0, V}$ is a $F_{c-{\pi}/{2}}$-subsolution. But by Lemma~\ref{lemma:properties-of-eigenvalues-DSL-convexity-short}, a $F_{c-{\pi}/{2}}$-subsolution which is also $C^2$ is 2-convex since $c-\frac{\pi}{2} \ge \frac{n-2}2\pi$, and in particular subharmonic on the pullback of $H$ under $\ell_{t_0,V}$. By Remark~\ref{remark:pcal-tcal-dirichlet-sets-DSL-convexity-short}, it follows that any $F_{c-\frac{\pi}{2}}$-subsolution is subharmonic along $\ell_{t_0,V}^{-1}H$. Since $\ell_{t_0,V}$ is a linear map, it follows that $u|_{H\cap \dcal}$ is subharmonic.
\end{proof}

Claim~\ref{claim:2-dim-plane-DSL-convexity-short} is a general result about affine subspaces of $\RR^{n+1}$.

    \begin{claim}
    \label{claim:2-dim-plane-DSL-convexity-short}
        Let $H\subset \RR\times \RR^n$ be a 2-dim affine plane that contains no line of the form $\RR\times \{x_0\}$. Then $H \subset P_{t_0,V}:= \{(t_0 + \langle V,x\rangle , x): x\in \RR^n\}$ for some $t_0\in \RR$, $V\in \RR^n$.
    \end{claim}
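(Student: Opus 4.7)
The plan is to reduce this to a short linear-algebra statement: the hypothesis on $H$ says exactly that the spatial projection is injective on $H$, and then $H$ is the graph of an affine function, which we extend from a $2$-dimensional subspace to all of $\RR^n$.

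More precisely, let $\pi : \RR \times \RR^n \to \RR^n$ be the spatial projection $\pi(t,x) = x$. First I would argue the equivalence
\[
\pi|_H \text{ is injective} \iff H \text{ contains no line of the form } \RR \times \{x_0\}.
\]
The forward direction is immediate since $\pi$ is constant on any such line. For the converse, if $\pi|_H$ fails to be injective, there exist distinct points $(t_1, x_0), (t_2, x_0) \in H$ with $t_1 \ne t_2$; since $H$ is affine, the line through these points is contained in $H$, and this line is $\RR \times \{x_0\}$.

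Given the hypothesis, $\pi|_H$ is therefore an injective affine map from the $2$-dimensional affine plane $H$ into $\RR^n$, so its image $\pi(H)$ is a $2$-dimensional affine subspace of $\RR^n$ and $\pi|_H : H \to \pi(H)$ is an affine isomorphism. Define $f : \pi(H) \to \RR$ by $f(x) := t$ where $(t,x)$ is the unique point of $H$ above $x$; equivalently, $f$ is the composition of $(\pi|_H)^{-1}$ with the time projection. Then $f$ is an affine function on $\pi(H)$.

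To finish, I would extend $f$ to an affine function $\tilde f$ on all of $\RR^n$, which is always possible for an affine function defined on an affine subspace: pick a basis of the linear direction of $\pi(H)$, complete it to a basis of $\RR^n$, and declare $\tilde f$ to be linear on this complementary subspace (or simply constant along it after translation). Writing $\tilde f(x) = t_0 + \langle V, x\rangle$ with $t_0 \in \RR$ and $V \in \RR^n$ yields $H \subset P_{t_0,V}$, since every $(t,x)\in H$ has $x \in \pi(H)$ and $t = f(x) = t_0 + \langle V,x\rangle$. There is no real obstacle here; the only point to be careful with is the equivalence in the first step, which is what makes the inverse-graph construction well defined.
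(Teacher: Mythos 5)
Your proof is correct and is essentially the same argument as the paper's, recast in coordinate-free language: the paper translates $H$ to a linear subspace $H_0 := H - (t',x')$, picks a basis $h_1=(t_1,x_1)$, $h_2=(t_2,x_2)$, observes that $x_1,x_2$ are linearly independent (which is exactly your statement that the spatial projection $\pi|_H$ is injective), and then solves the $2\times n$ linear system $\langle V,x_i\rangle = t_i$ for $V$ (which is your affine-extension step, made explicit). The two presentations differ only in whether the key point --- that $(1,0)\notin H_0$ forces the spatial components of any basis to be independent --- is expressed via a chosen basis or via injectivity of $\pi|_H$.
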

    \begin{proof}
        Fix $(t',x') \in H$ and consider the subspace $H_0:= H-(t',x')$. By assumption, $(1,0) \notin H_0$. Suppose $H_0 = \operatorname{span}\{h_1,h_2\}$ where $h_1 = (t_1,x_1)$ and $h_2 = (t_2,x_2)$. Note that $x_1,x_2 \in \RR^n$ are linearly independent because $(1,0) \notin H_0$. Explicitly, if $x_1 = c x_2$, then $ct_2 - t_1 \neq 0$ since $H_0$ is 2-dimensional, and $(1,0) =  (ct_2 - t_1)^{-1} (h_2 - h_1) \in H_0$ which contradicts the assumption on $H$.
        
        Since $x_1,x_2 $ are linearly independent, there exists $V\in \RR^n$ solving the linear system
        \[\begin{pmatrix}x_1\cr x_2\end{pmatrix} V =
        \begin{pmatrix}t_1\cr t_2\end{pmatrix}. \]
        In other words $\langle V,x_1 \rangle = t_1$ and $\langle V, x_2 \rangle = t_2$. Set $t_0 := t' - \langle V,x'\rangle$. It turns out $H \subset P_{t_0,V}$ for this choice of $t_0$ and $V$. Consider an arbitrary element $(t,x)\in H$. Since $(t,x) - (t',x') \in H_0$, there exist $c_1,c_2\in \RR$ such that $(t,x) - (t',x') = c_1 (t_1,x_1) + c_2(t_2,x_2)$. Then
        \begin{align*}
            (t,x) =&\ \big(t' + \langle V, c_1x_1+c_2x_2 \rangle, x' + c_1x_1 + c_2x_2\big) \\
            =&\ \big(t_0 + \langle V, x' + c_1 x_1+ c_2x_2 \rangle, x' + c_1x_1 + c_2x_2\big) \in P_{t_0,V}.
        \end{align*}
        Since $(t,x)$ was arbitrary, $H \subset P_{t_0,V}$.
    \end{proof}

\subsection{Time convexity}
\label{subsec:time-covexity}

This section establishes convexity in time for subsolutions $u \in \fcal_c(\dcal)$ in the top two branches of the DSL, $c\ge \frac{n-1}2\pi$ (Corollary~\ref{cor:time-convexity-DSL-convexity-short}). As in {\S}\ref{subsec:affine-slices-of-DSL}, the primary step in the argument is a linear algebra observation (Lemma~\ref{lemma:time-convexity-DSL-convexity-short}) which establishes the result for $u\in C^2(\dcal)\cap \fcal_c(\dcal)$.

\begin{lemma}
\label{lemma:time-convexity-DSL-convexity-short}
     Suppose 
     $c\in \left[\frac{n-1}2\pi,\frac{n+1}2\pi \right)$.
     For $A \in \fcal_c\subset \Sym^2(\RR^{n+1})$,
     $a_{00} \ge 0$ (recall the notation (\ref{eq:blockrepresentationofDSLmatrix-DSL-convexity-short})). Furthermore if $\vec{a} \neq 0 $ then $a_{00} >0$.
\end{lemma}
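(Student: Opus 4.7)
The plan is to exploit the affine-slice proposition and convert it into a trace inequality in a one-parameter family, which then becomes a non-negative quadratic form in the slicing parameter $V$.

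First I would apply Proposition~\ref{prop:affine-slices-of-DSL-matrix-is-SLag-DSL-convexity-short} to $A$: since $c \ge \frac{n-1}{2}\pi$, we get $\ell_V^* A \in F_{c - \pi/2}$ with $c - \frac{\pi}{2} \ge \frac{n-2}{2}\pi$ for every $V \in \RR^{n \times 1}$. This places $\ell_V^*A$ in the top two branches of the SL, so Lemma~\ref{lemma:properties-of-eigenvalues-DSL-convexity-short}(ii) applies: the eigenvalues $\nu_1 \ge \dots \ge \nu_n$ of $\ell_V^*A$ satisfy $\nu_{n-1} \ge |\nu_n|$, so (taking $n \ge 2$)
\[
  \tr(\ell_V^*A) \;=\; \sum_{i=1}^{n-2}\nu_i + (\nu_{n-1}+\nu_n) \;\ge\; 0 \qquad \forall\, V \in \RR^{n \times 1}.
\]
Next, using the pullback formula \eqref{eq:pull-back-under-ell-DSL-convexity-short-DSL} and taking traces (note $\tr(VV^T) = |V|^2$ and $\tr(V \vec{a}^T) = \tr(\vec{a} V^T) = \vec{a}^T V$), I obtain the explicit expression
\[
  \tr(\ell_V^* A) \;=\; a_{00}\,|V|^2 + 2\,\vec{a}^T V + \tr(A^+).
\]

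Combining, the real quadratic polynomial $Q(V) := a_{00}|V|^2 + 2\vec{a}^T V + \tr(A^+)$ is non-negative on all of $\RR^n$. A non-negative quadratic polynomial on $\RR^n$ necessarily has non-negative leading coefficient, giving $a_{00} \ge 0$. Moreover, if $a_{00} = 0$, then $Q$ degenerates to the affine map $V \mapsto 2\vec{a}^T V + \tr(A^+)$, which is bounded below on $\RR^n$ only if $\vec{a} = 0$; contrapositively, $\vec{a} \neq 0$ forces $a_{00} > 0$. Both conclusions of the lemma follow.

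The main obstacle I anticipate is finding the right ``lever'' that upgrades the pointwise information $\widetilde{\Theta}(A) \ge c$ into a condition depending on an extra parameter — here $V$ — so that the desired sign of $a_{00}$ appears as the leading coefficient of a non-negative polynomial. Proposition~\ref{prop:affine-slices-of-DSL-matrix-is-SLag-DSL-convexity-short} is exactly this lever, and Lemma~\ref{lemma:properties-of-eigenvalues-DSL-convexity-short}(ii) supplies the trace bound needed to extract a scalar inequality. The degenerate edge case $n=1$ (where Lemma~\ref{lemma:properties-of-eigenvalues-DSL-convexity-short}(ii) becomes vacuous and $F_{c-\pi/2}$ degenerates at $c=0$) would need a direct appeal to Claim~\ref{claim:lag-ang-ST-lag-ang-formula-DSL-convexity-short}, where the imaginary part of the formula reads off the sign of $a_{00}$ essentially tautologically.
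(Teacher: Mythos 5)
Your proof is correct (for $n\ge 2$) but takes a genuinely different route from the paper's. The paper proves Lemma~\ref{lemma:time-convexity-DSL-convexity-short} by a direct computation: after rotating to diagonalize $A^+$, it expands the Rubinstein--Solomon formula \eqref{eq:lag-ang-ST-lag-ang-formula-DSL-convexity-short} to write $\widetilde{\Theta}(A)-\widetilde{\theta}(A^+)$ explicitly as an arctangent, and then uses a weighted-average estimate (Claim~\ref{claim:DSL-partial-regularity-2-branch-t-conv-claim-1.1}) to show that $a_{00}\le 0$ would force $\widetilde{\Theta}(A)<\frac{n-1}{2}\pi$, a contradiction. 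Your argument instead funnels everything through Proposition~\ref{prop:affine-slices-of-DSL-matrix-is-SLag-DSL-convexity-short}: it places each slice $\ell_V^*A$ in $F_{c-\pi/2}$ with $c-\pi/2\ge\frac{n-2}{2}\pi$, uses Lemma~\ref{lemma:properties-of-eigenvalues-DSL-convexity-short}(ii) to get $\tr(\ell_V^*A)\ge 0$, and then reads the sign of $a_{00}$ off as the leading coefficient of the resulting non-negative quadratic polynomial $Q(V)=a_{00}|V|^2+2\vec{a}^T V+\tr(A^+)$. Since Proposition~\ref{prop:affine-slices-of-DSL-matrix-is-SLag-DSL-convexity-short} and Lemma~\ref{lemma:properties-of-eigenvalues-DSL-convexity-short} are both proved independently of Lemma~\ref{lemma:time-convexity-DSL-convexity-short}, there is no circularity. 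Your route is conceptually cleaner and shows that, for $n\ge 2$, the condition $a_{00}\ge 0$ in Step (i) of Proposition~\ref{prop:DSL-top-two-branches-are-products-convexity-short-note} is actually a consequence of the slicing condition $\ell_V^*A\in F_{c-\pi/2}$ rather than an independent hypothesis; the paper's computation is more self-contained (it uses only the formula \eqref{eq:lag-ang-ST-lag-ang-formula-DSL-convexity-short}, not the full force of the slice proposition) and yields a slightly sharper one-line bound $\widetilde{\Theta}(A)\le\sum_{i=1}^{n-1}\tan^{-1}\lambda_i$. You correctly flag that $n=1$ falls outside your trace argument (Lemma~\ref{lemma:properties-of-eigenvalues-DSL-convexity-short}(ii) is vacuous there); note that the paper's strict inequality $\sum_{i=1}^{n-1}\tan^{-1}\lambda_i<\frac{n-1}{2}\pi$ is also only strict for $n\ge 2$, so both arguments implicitly assume $n\ge 2$ in Case 2 and your disclaimer is well-placed.
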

\begin{proof}
    Let $U \in O(n)$ be an orthogonal matrix diagonalizing $A^+$, i.e., 
            \[U^T A^+ U = \diag( \lambda_1,\dots, \lambda_n).\]
        For $V := \diag(1, U) \in O(n+1)$ and using notation \eqref{eq:blockrepresentationofDSLmatrix-DSL-convexity-short},
        \begin{align}
            V^T A V =&\ \begin{pmatrix}
            1 & 0 \\
            0 & U^T
        \end{pmatrix} \begin{pmatrix}
        a_{00} & a^T\\
        a & A^+
    \end{pmatrix} \begin{pmatrix}
            1 & 0 \\
            0 & U
        \end{pmatrix} = \begin{pmatrix}
        a_{00} & a^TU\\
        U^Ta & U^TA^+U
    \end{pmatrix} \nonumber \\
        =&\ \begin{pmatrix}
    a_{00} & \vec{b}^T \\
    \vec{b} &\begin{matrix}\lambda_{1} & & \\ & \ddots & \\ & & \lambda_{n}\end{matrix}
    \end{pmatrix} \label{eq:time-convexity-notation-1-DSL-convexity-short}
        \end{align}
        where $\vec{b} = (b_1,\dots, b_n):= U^T \vec{a}$.  Note that $(V^TAV)^+ = U^TA^+U$ by \eqref{eq:time-convexity-notation-1-DSL-convexity-short}. Then by \eqref{eq:lag-ang-ST-lag-ang-formula-DSL-convexity-short}, Claim~\ref{claim:DSL-partial-regularity-invariance-of-lag-angle-under-rotation}, and the definition of $\widetilde{\theta}$ in \eqref{eq:lifted-lag-angle-DSL-convexity-short},
            \begin{align}
                \widetilde{\Theta}(A) - \widetilde{\theta}(A^+) =&\ \widetilde{\Theta}(V^TAV) - \widetilde{\theta}(U^TA^+U) \nonumber \\
                =&\ \arg \left( \sqrt{-1} a_{00} + \vec{b}^T (I + \sqrt{-1}U^TA^+U )^{-1} \vec{b} \right)  \nonumber \\
                =&\ \arg \left( \sqrt{-1} a_{00} + \vec{b}^T \diag(1+\sqrt{-1}\lambda_1,\dots, 1+\sqrt{-1}\lambda_n) ^{-1} \vec{b} \right)  \nonumber \\
                =&\ \arg \left( \sqrt{-1} a_{00} + \sum_{i=1}^n \frac{b_i^2 }{1+\sqrt{-1}\lambda_i} \right)  \nonumber \\
                =&\ \arg \left( \sqrt{-1} a_{00} + \sum_{i=1}^n \frac{b_i^2(1-\sqrt{-1}\lambda_i) }{1+\lambda_i^2} \right)  \nonumber \\
                =&\ \arg \left( \sum_{i=1}^n \frac{b_i^2 }{1+\lambda_i^2} + \sqrt{-1} a_{00} - \sqrt{-1} \sum_{i=1}^n \frac{b_i^2\lambda_i }{1+\lambda_i^2} \right). \label{eq:DSL-partial-regularity-second-branch-time-convexity-eq1.1-1}
            \end{align}
            \noindent \textbf{Case 1:} $\vec{a} = 0$. Suppose on the contrary that $a_{00} <0$. Then by \eqref{eq:DSL-partial-regularity-second-branch-time-convexity-eq1.1-1}
            \[\widetilde{\Theta}(A) = \widetilde{\theta}(A^+) + \arg(\sqrt{-1} a_{00}) = \widetilde{\theta}(A^+) - \frac{\pi}{2}  < \frac{n\pi}{2} - \frac{\pi}{2} = (n-1)\frac{\pi}{2},\]
            contradicting the assumption $A \in \fcal_c = \{\widetilde{\Theta} \ge c\}$ for $c \ge \frac{n-1}{2} \pi$. Thus $a_{00} \ge 0$ in this case.
            
            \noindent \textbf{Case 2:} $\vec{a} \neq 0$. Observe that $\vec{b} = U^T \vec{a} \neq 0$ since $U$ is orthogonal. Also assume on the contrary that $a_{00} \le 0$. By \eqref{eq:DSL-partial-regularity-second-branch-time-convexity-eq1.1-1}, the fact that $U^TA^+U$ is diagonal and by \eqref{eq:lifted-lag-angle-DSL-convexity-short},
            \begin{equation}
                \label{eq:BIrrDSL-DSL-partial-regularity-second-branch-time-convexity-eq1.1}
                 \widetilde{\Theta}(A) = \sum_{i=1}^n \tan^{-1} \lambda_i +  \tan^{-1} \left( \frac{a_{00} - \sum_{i=1}^n \frac{b_i^2\lambda_i }{1+\lambda_i^2}}{\sum_{i=1}^n \frac{b_i^2 }{1+\lambda_i^2}} \right) .
            \end{equation}
        The next claim gives an upper bound on \eqref{eq:BIrrDSL-DSL-partial-regularity-second-branch-time-convexity-eq1.1}.
            \begin{claim}
            \label{claim:DSL-partial-regularity-2-branch-t-conv-claim-1.1}
            Suppose $a_{00} \le 0$, $\vec{b} = (b_1,\dots, b_n) \in \RR^n$ such that $\vec{b} \neq 0$, and $\lambda_1\ge \dots \ge \lambda_n$. Then
                \[\frac{a_{00} - \sum_{i=1}^n \frac{b_i^2\lambda_i }{1+\lambda_i^2}}{\sum_{i=1}^n \frac{b_i^2 }{1+\lambda_i^2}} \le - \min_{1\le i\le n} \lambda_i = - \lambda_n. \]
            \end{claim}
            \begin{proof}
                Let $\mu_i := b_i^2/(1+\lambda_i^2)$ denote non-negative weights. Since a minimum is always smaller than the weighted average,
                \[ \frac{\sum_{i=1}^n \frac{b_i^2\lambda_i }{1+\lambda_i^2}}{\sum_{i=1}^n \frac{b_i^2 }{1+\lambda_i^2}} = \frac{\sum_{i=1}^n \lambda_i \mu_i}{\sum_{i=1}^n \mu_i}  \ge \min_{1\le i\le n} \lambda_i. \]
                Since $a_{00} \le 0$ by assumption,
                \begin{align*}
                    \frac{a_{00} - \sum_{i=1}^n {\mu_i \lambda_i }}{\sum_{i=1}^n \mu_i} =&\ \frac{a_{00}}{\sum_{i=1}^n \mu_i}  - \frac{ \sum_{i=1}^n \mu_i \lambda_i}{\sum_{i=1}^n \mu_i}  
                    \le\  0 - \min_{1\le i\le n} \lambda_i = - \lambda_n. \tag*{\qedhere}
                \end{align*}
            \end{proof}
            By Claim~\ref{claim:DSL-partial-regularity-2-branch-t-conv-claim-1.1} applied to \eqref{eq:BIrrDSL-DSL-partial-regularity-second-branch-time-convexity-eq1.1}, \[\begin{aligned}
                \widetilde{\Theta}(A) =&\ \sum_{i=1}^n \tan^{-1} \lambda_i + \tan^{-1} \left( \frac{a_{00} - \sum_{i=1}^n \frac{b_i^2\lambda_i }{1+\lambda_i^2}}{\sum_{i=1}^n \frac{b_i^2 }{1+\lambda_i^2}} \right)  \\
                \le &\ \sum_{i=1}^n \tan^{-1} \lambda_i + \tan^{-1} (-\lambda_n)  \\
                =&\ \sum_{i=1}^{n-1} \tan^{-1} \lambda_i 
                < \frac{n-1}{2}\pi.
            \end{aligned}\]
            This contradicts the assumption $\widetilde{\Theta}(A)  \ge \frac{n-1}{2}\pi$. Thus $a_{00} > 0$ in this case. The two cases together complete the proof of Lemma~\ref{lemma:time-convexity-DSL-convexity-short}.
\end{proof}

Corollary~\ref{cor:time-convexity-DSL-convexity-short-C2} follows directly from Lemma~\ref{lemma:time-convexity-DSL-convexity-short} applied to the Hessian matrix.

\begin{corollary}
    \label{cor:time-convexity-DSL-convexity-short-C2}
    Consider $\dcal := (0,1)\times D$ for some domain $D \subset \RR^{n}$ and let $c\in \left[\frac{n-1}2\pi, \frac{n+1}2\pi\right)$. If $u\in \fcal_c(\dcal) \cap C^2(\dcal)$ then $t\mapsto u(t,x_0)$ is convex for $t\in (0,1)$ for each $x_0 \in D$. Furthermore if $\partial_t D_x u(t_0,x_0) \neq 0 $ for some $(t_0, x_0) \in \dcal$ then $\partial_t^2 u (t_0,x_0) >0$, i.e., $u(\cdot, x_0)$ is uniformly convex in a neighborhood of $t_0$.
\end{corollary}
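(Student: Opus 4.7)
The plan is to read off the corollary directly from Lemma~\ref{lemma:time-convexity-DSL-convexity-short} by applying it pointwise to $A := D^2 u(t_0, x_0)$. Since $u \in C^2(\dcal) \cap \fcal_c(\dcal)$, the Hessian at every $(t_0, x_0) \in \dcal$ lies in $\fcal_c$, and in the block notation \eqref{eq:blockrepresentationofDSLmatrix-DSL-convexity-short} one has
\[
A = D^2 u(t_0, x_0) = \begin{pmatrix} \partial_t^2 u(t_0, x_0) & (\partial_t D_x u(t_0, x_0))^T \\ \partial_t D_x u(t_0, x_0) & D_x^2 u(t_0, x_0) \end{pmatrix},
\]
so that $a_{00} = \partial_t^2 u(t_0, x_0)$ and $\vec{a} = \partial_t D_x u(t_0, x_0)$.

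First I would apply the first conclusion of Lemma~\ref{lemma:time-convexity-DSL-convexity-short} at every $(t_0, x_0) \in \dcal$: since $c \ge \frac{n-1}{2}\pi$, the lemma yields $a_{00} = \partial_t^2 u(t_0, x_0) \ge 0$. Fixing $x_0 \in D$ and letting $t_0$ vary over $(0,1)$, this shows that $t \mapsto u(t, x_0)$ has non-negative second derivative on the interval $(0,1)$ and hence is convex there.

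Next, I would invoke the second conclusion of Lemma~\ref{lemma:time-convexity-DSL-convexity-short}: if $\vec{a} = \partial_t D_x u(t_0, x_0) \neq 0$, then $a_{00} = \partial_t^2 u(t_0, x_0) > 0$. Since $u \in C^2(\dcal)$, the function $t \mapsto \partial_t^2 u(t, x_0)$ is continuous, so strict positivity at $t_0$ persists on a neighborhood of $t_0$ in $(0,1)$, yielding the claimed uniform convexity of $u(\cdot, x_0)$ there. This is the entire argument; no further estimates are required, since the subtle work has already been carried out inside Lemma~\ref{lemma:time-convexity-DSL-convexity-short}. The only thing to be careful about is that the lemma genuinely applies pointwise to the Hessian with the same block decomposition used in \eqref{eq:blockrepresentationofDSLmatrix-DSL-convexity-short}, which is automatic once the time variable is declared to be the distinguished coordinate indexed by $0$.
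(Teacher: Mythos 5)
Your proposal is correct and matches the paper's intent exactly: the paper simply states that the corollary ``follows directly from Lemma~\ref{lemma:time-convexity-DSL-convexity-short} applied to the Hessian matrix,'' which is precisely your argument of identifying $a_{00}=\partial_t^2 u$ and $\vec{a}=\partial_t D_x u$ in the block decomposition \eqref{eq:blockrepresentationofDSLmatrix-DSL-convexity-short} and reading off the two conclusions of the lemma. The additional remark that $C^2$-continuity of $\partial_t^2 u$ propagates strict positivity to a neighborhood of $t_0$ is a sound and helpful elaboration of the ``uniformly convex in a neighborhood'' claim.
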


\begin{corollary}
    \label{cor:time-convexity-DSL-convexity-short}
    Consider $\dcal := (0,1)\times D$ for some domain $D \subset \RR^{n}$ and let $c\in \left[\frac{n-1}2\pi, \frac{n+1}2\pi\right)$. If $u\in \fcal_c(\dcal)$ then $t\mapsto u(t,x_0)$ is convex for $t\in (0,1)$ for each $x_0 \in D$.
\end{corollary}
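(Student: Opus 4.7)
The plan is to reduce the statement for general upper semi-continuous viscosity subsolutions to the $C^2$ case already handled by Corollary~\ref{cor:time-convexity-DSL-convexity-short-C2}, via the Harvey--Lawson restriction theorem (Theorem~\ref{thm:restriction-theorem-DSL-convexity-short}). The key observation is that convexity in time along the line $\RR \times \{x_0\}$ is exactly $\pcal_1$-subharmonicity of the pullback of $u$ under the embedding
\[
i_{x_0} : (0,1) \hookrightarrow \dcal, \qquad i_{x_0}(t) := (t, x_0),
\]
which is a smooth embedding of the $1$-dimensional domain $(0,1)$ into $\dcal \subset \RR^{n+1}$.

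First, I would verify the hypothesis (i) of Theorem~\ref{thm:restriction-theorem-DSL-convexity-short} for the pair $(F_1, F_2) = (\pcal_1, \fcal_c)$ and the embedding $i_{x_0}$. Given any $u \in C^2(\dcal)$ and any $t_0 \in (0,1)$ with $y := (t_0, x_0) \in \dcal$, one has
\[
D^2(u \circ i_{x_0})(t_0) = \partial_t^2 u(t_0, x_0),
\]
which, in the block notation \eqref{eq:blockrepresentationofDSLmatrix-DSL-convexity-short} applied to $A := D^2u(t_0, x_0)$, is precisely the scalar $a_{00}$. Since $c \ge \frac{n-1}{2}\pi$, Lemma~\ref{lemma:time-convexity-DSL-convexity-short} gives $a_{00} \ge 0$ whenever $A \in \fcal_c$. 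Hence $D^2u(y) \in \fcal_c$ implies $D^2(u \circ i_{x_0})(t_0) \in \pcal_1 = [0, \infty)$, which is hypothesis (i).

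By the restriction theorem, hypothesis (ii) then holds: for any $u \in \fcal_c(\dcal)$, the pullback $i_{x_0}^* u = u(\,\cdot\,, x_0)$ lies in $\pcal_1((0,1))$. Since $\pcal_1$-subsolutions on an open interval of $\RR$ are precisely the convex functions (in the viscosity sense, which for upper semi-continuous functions of one variable coincides with classical convexity), this concludes that $t \mapsto u(t, x_0)$ is convex on $(0,1)$ for each fixed $x_0 \in D$.

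There is no real obstacle here; the entire content has been packaged into Lemma~\ref{lemma:time-convexity-DSL-convexity-short} and the restriction theorem. The only mild point to check, which I would mention explicitly, is that $i_{x_0}$ lands in $\dcal$ for all $t \in (0,1)$ (true because $\dcal = (0,1) \times D$), and that viscosity $\pcal_1$-subharmonicity of a one-variable upper semi-continuous function is equivalent to convexity of that function on the interval.
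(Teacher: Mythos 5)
Your proposal is correct and follows essentially the same route as the paper: establish the scalar estimate $a_{00} \ge 0$ via Lemma~\ref{lemma:time-convexity-DSL-convexity-short}, then transfer from $C^2$ to general upper semi-continuous subsolutions with the Harvey--Lawson restriction theorem (Theorem~\ref{thm:restriction-theorem-DSL-convexity-short}, in the guise of Remark~\ref{remark:pcal-tcal-dirichlet-sets-DSL-convexity-short}). You spell out the embedding $i_{x_0}$ and the identification of $\pcal_1$-subharmonicity on an interval with classical convexity a bit more explicitly than the paper does, but the content is identical.
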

\begin{proof}
    If $u\in C^2(\dcal)\cap \fcal_c(\dcal)$, then by Lemma~\ref{lemma:time-convexity-DSL-convexity-short}, $\partial_{t}^2 u \ge 0$ on $\dcal$. Conclude that $u(\cdot, x_0)$ is convex on $(0,1)$. Corollary~\ref{cor:time-convexity-DSL-convexity-short} then follows by Theorem~\ref{thm:restriction-theorem-DSL-convexity-short} and Remark~\ref{remark:pcal-tcal-dirichlet-sets-DSL-convexity-short}.
\end{proof}

\begin{remark}
    The conclusion of Corollary~\ref{cor:time-convexity-DSL-convexity-short} can in fact be extended to boundary points $x_0 \in \partial D$ whenever $u$ extends continuously up to the boundary $\overline{ \dcal}$ (compare with Remark~\ref{remark:boundary-problems-DSL-convexity-short}). If $u\in C^0(\overline{\dcal})$, then for any $x_0 \in \partial D$ $u(\cdot,x_0)$ is a point wise limit of the continuous functions $u(\cdot,x_n)$ for some sequence $x_n \in D$ converging to $x_0$. Thus $t\mapsto u(t, x_0)$ is convex for $t\in (0,1)$, for any $x_0 \in \overline{D}$.
\end{remark}


\subsection{Proof of Theorem \ref{theorem:affine-slices-of-DSL-are-SLag-DSL-convexity-short}}
\label{subsec:proof-of-theorem-1.1-(1)-(2)}

This section contains a proof of Theorem~\ref{theorem:affine-slices-of-DSL-are-SLag-DSL-convexity-short}. Two different proofs of Theorem~\ref{theorem:affine-slices-of-DSL-are-SLag-DSL-convexity-short}(iii) are included. The first is an elementary proof that builds on the ideas of the previous section. The second proof follows by an application of the minimum principle for Lagrangian graphs \cite[Theorem~3.1]{Dar-Rub2019AMP}.

\begin{proof}[Proof of Theorem \ref{theorem:affine-slices-of-DSL-are-SLag-DSL-convexity-short}]
Theorem \ref{theorem:affine-slices-of-DSL-are-SLag-DSL-convexity-short}(i) follows from Corollary~\ref{cor:time-convexity-DSL-convexity-short}, and Theorem \ref{theorem:affine-slices-of-DSL-are-SLag-DSL-convexity-short}(ii) follows from Corollary~\ref{cor:2-convex-second-branch-DSL-convexity-short}. It  remains to prove Theorem \ref{theorem:affine-slices-of-DSL-are-SLag-DSL-convexity-short}(iii). 
    We will give two proofs.
    By assumption $c\ge \frac{n}{2}\pi$, so 
    \beq\lb{cpi2recallEq}
    c-\frac{\pi}{2} \ge \frac{n-1}2\pi.
    \eeq

\noindent
{\it First proof of Theorem \ref{theorem:affine-slices-of-DSL-are-SLag-DSL-convexity-short}(iii).}
    
    \begin{claim}
    \label{claim:affine-slices-are-convex-DSL-convexity-short}
        For any $t_0 \in \RR $ and any ${V} \in \RR^{n\times 1}$, $u\circ \ell_{t_0,{V}} (x) = u(t_0 + {V}^Tx, x)$ is convex on its domain of definition.
    \end{claim}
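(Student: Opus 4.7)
The plan is to reduce Claim~\ref{claim:affine-slices-are-convex-DSL-convexity-short} to the classical fact that top-branch SL subsolutions are convex. Since we are working in the top branch of the DSL with $c \geq n\pi/2$, the inequality \eqref{cpi2recallEq} puts the shifted branch $c - \pi/2$ in the top branch of the SL, which is exactly where convexity of subsolutions is known.

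First, I would invoke Corollary~\ref{cor:affine-slices-of-DSL-are-SL-DSL-convexity-short}, applied to $u \in \fcal_c(\dcal)$ along the embedding $\ell_{t_0,V}$, to obtain that the affine slice $u \circ \ell_{t_0, V}$ lies in $F_{c - \pi/2}(D_{t_0, V})$, where $D_{t_0, V} = \ell_{t_0, V}^{-1}(\dcal)$ is the pullback domain. Next, I would apply Lemma~\ref{lemma:properties-of-eigenvalues-DSL-convexity-short}(i), which provides the subequation inclusion $F_{c - \pi/2} \subset \pcal_n$. Since viscosity subsolution classes are monotone under inclusion of subequations (an immediate consequence of Definition~\ref{def:subsolution-DSL-convexity-short}: if $F \subset F'$ then every touching test function $\varphi$ with $D^2\varphi(x_0) \in F$ also satisfies $D^2\varphi(x_0) \in F'$), this gives $u \circ \ell_{t_0, V} \in \pcal_n(D_{t_0, V})$, i.e., $u \circ \ell_{t_0, V}$ is convex on its domain of definition.

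I do not foresee any real obstacle. The two substantive inputs---the affine-slice statement (Corollary~\ref{cor:affine-slices-of-DSL-are-SL-DSL-convexity-short}, which rests on the linear-algebra Proposition~\ref{prop:affine-slices-of-DSL-matrix-is-SLag-DSL-convexity-short}) and the top-branch convexity for SL (Lemma~\ref{lemma:properties-of-eigenvalues-DSL-convexity-short}(i))---have already been carried out in the paper, so the remaining work is only the elementary observation that a smaller subequation produces a more restrictive class of viscosity subsolutions. After this claim, the enclosing proof of Theorem~\ref{theorem:affine-slices-of-DSL-are-SLag-DSL-convexity-short}(iii) will presumably combine convexity of $u$ along every direction of the form $(V^T e, e)$ (from the claim, as $t_0, V, e$ vary) with convexity in the purely time-like direction (from Corollary~\ref{cor:time-convexity-DSL-convexity-short}), yielding convexity along every line in $\dcal$ and hence joint convexity.
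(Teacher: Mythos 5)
Your proposal matches the paper's proof essentially line for line: invoke Corollary~\ref{cor:affine-slices-of-DSL-are-SL-DSL-convexity-short} to place the affine slice in $F_{c-\pi/2}$, note $F_{c-\pi/2}\subset\pcal_n$ via \eqref{cpi2recallEq} and Lemma~\ref{lemma:properties-of-eigenvalues-DSL-convexity-short}, and conclude convexity by monotonicity of subsolution classes. The only cosmetic difference is that the paper cites \cite[Proposition~4.5]{HarLaw-dirichletduality} for the last step, whereas you spell out the monotonicity directly from Definition~\ref{def:subsolution-DSL-convexity-short}.
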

        \begin{proof}
            By Corollary~\ref{cor:affine-slices-of-DSL-are-SL-DSL-convexity-short}, $u\circ \ell_{t_0,{V}} (x) = u(t_0 + {V}^Tx, x) \in F_{c-\frac{\pi}{2}}$ on its domain of definition. But $F_{c-\frac{\pi}{2}} \subset \pcal_n$ by \eqref{cpi2recallEq} and  Lemma~\ref{lemma:properties-of-eigenvalues-DSL-convexity-short}. Conclude that $u_{t_0,{V}}$ is convex \cite[Proposition~4.5]{HarLaw-dirichletduality}.
        \end{proof}

        Fix $(t_0,x_0), (t_1,x_1)\in \RR\times\RR^n$, and consider an arbitrary line $\RR\ni s\mapsto\gamma(s) = (t_0,x_0)+s(t_1, x_1)\subset\RR^{n+1}$. It suffices to prove that $u$ is convex along $\gamma$ (i.e., that $s\mapsto u\circ\gamma(s)$ is convex) to establish convexity of $u$.
        
        First suppose $x_1 =0$. Then $u(\cdot, x_0)$ is convex by Theorem \ref{theorem:affine-slices-of-DSL-are-SLag-DSL-convexity-short}(iii) and it follows that $u\circ \gamma$ is convex on its domain of definition. Now suppose $x_1\neq 0$. Pick ${V} := t_1x_1^T/|x_1|^2 \in \RR^{n\times 1}$. Then
        \begin{align*}
            u\circ \ell_{t_0 - {V}^Tx_0, {V}} (x_0 +  sx_1) =&\ u(t_0 - {V}^Tx_0 + {V}^T (x_0 +  sx_1), (x_0 +  sx_1) )\\
            =&\ u(t_0 -{V}^Tx_0 + {V}^T x_0 + s t_1, x_0 +  x_1 ) \\
            =&\ u(t_0 + s t_1, x_ 0 + s x_1) = u\circ \gamma(s).
        \end{align*}
        But $u\circ \ell_{t_0 - {V}^Tx_0, {V}} $ is convex by Claim~\ref{claim:affine-slices-are-convex-DSL-convexity-short}
        and the left hand side is the restriction of this function
        to the line $\RR\ni s\mapsto x_0 +  sx_1\in\RR^n$, hence convex in $s$. This concludes the first proof
        of Theorem \ref{theorem:affine-slices-of-DSL-are-SLag-DSL-convexity-short} (iii).

    The second proof of Theorem \ref{theorem:affine-slices-of-DSL-are-SLag-DSL-convexity-short}(iii) is less elementary, as it relies
    on the main theorem of \cite{Dar-Rub2019AMP}, but we
    present it for comparison. In fact, the 
main theorem of \cite{Dar-Rub2019AMP} is generalized in \S\ref{sec:Minimum-principle} using the philosophy of the first proof above.

\vspace{0.5em}
\noindent
{\it Second proof of Theorem \ref{theorem:affine-slices-of-DSL-are-SLag-DSL-convexity-short}(iii).} Recall the partial Legendre transform and envelope defined in \S\ref{sec:Legendre-duality-DSL-convexity-short}. By \cite[Lemma~3.6]{Dar-Rub2019AMP} or by Corollary~\ref{cor:time-convexity-DSL-convexity-short}, $t\mapsto u(t,x)$ is convex for each $x\in D$ and so
\begin{equation}
    \label{eq:2nd-proof-of-thm(iii)-eq1-DSL-convexity-short}
    u(t,x) = u^{\star \star}(t,x) = \sup_{\tau \in \RR } [u^\star(\tau,x) + t\tau]. 
\end{equation}
Then by the minimum principle for Lagrangian graphs in the top branch of the DSL \cite[Theorem~3.1]{Dar-Rub2019AMP},
\begin{equation}
    \label{eq:2nd-proof-of-thm(iii)-eq2-DSL-convexity-short}
    u^\star (\tau, \cdot) = \inf_{t\in [0,1]}[ u(t,\cdot) - t\tau]  \in F_{c-\pi/2}(D), \qquad \forall\ \tau \in \RR.
\end{equation}
Since $c\ge \frac{n}{2} \pi$, $F_{c-\pi/2} \subset \pcal$ by Lemma~\ref{lemma:properties-of-eigenvalues-DSL-convexity-short}, and so $F_{c-\pi/2}(D) \subset \pcal(D)$ \cite[(4.2), p. 409]{HarLaw-dirichletduality}, and $x\mapsto u^\star(\tau,x)$ is convex. Furthermore $(t,x) \mapsto  u^\star(t,x) + t\tau$ is jointly convex in $(t,x)$ being a sum of a convex function in $x$ and a linear function in $t$. Then by \eqref{eq:2nd-proof-of-thm(iii)-eq1-DSL-convexity-short}, $u(t,x)$, being the supremum of a family of jointly convex functions in $(t,x)$, is jointly convex.
\end{proof}

Observe that the partial Legendre transform in time used in 
\cite[Theorem~3.1]{Dar-Rub2019AMP} is {\it not} the standard one
in convexity \cite[p.~104]{rockafeller-convex-analysis-MR1451876}. It is 
instead defined using an infimum in one direction
and a supremum in the other. Showing convexity of the infimum does not follow from simple principles and is the main theorem of 
\cite{Dar-Rub2019AMP}. To go back, convexity is transferred easily via the supremum.

\section{Minimum Principle}
\label{sec:Minimum-principle}

This section establishes the minimum principle for the second branch of the DSL. Definition~\ref{def:product-of-dirichlet-sets-DSL-convexity-short} recalls the Ross--Witt-Nystr\"om $\star$-product. The top two branches of the DSL are endowed with $\star$-product structures in Proposition~\ref{prop:DSL-top-two-branches-are-products-convexity-short-note}, a linear algebra statement whose proof relies on Proposition~\ref{prop:affine-slices-of-DSL-matrix-is-SLag-DSL-convexity-short}, Lemma~\ref{lemma:spectrum-of-AGamma-convexity-note} and Lemma~\ref{lemma:time-convexity-DSL-convexity-short}. Finally Corollary~\ref{cor:minimum-principle-DSL-convexity-short} follows by appealing to the Ross--Witt-Nystr\"om minimum principle for convex $\star$-products \cite{Ross-Witt-Nystrom-minimum-principle-MR4350213}.

\begin{definition}
\label{def:product-of-dirichlet-sets-DSL-convexity-short}
{\rm\cite[Definition~3.1]{Ross-Witt-Nystrom-minimum-principle-MR4350213}} Let $F \subset \Sym^2(\RR^m)$ and $G \subset \Sym^2(\RR^n)$ be two Dirichlet subequations (Definition~\ref{def:dirichlet-set-misc-definitions-DSL-convexity-short}). Recall the pullback $\ell_{t_0,\Gamma}:\RR^n \rightarrow \RR^{m+n}$ defined by \eqref{eq:pull-back-under-ell-DSL-convexity-short}. Let $A\in \Sym^2(\RR^{m+n})$ consider the block representation \eqref{eq:n+m-matrix-decomp-DSL-convexity-short}. Then the $\star$-product Dirichlet subequation $F\star G \subset \Sym^2(\RR^{m+n})$ is the set of matrices $A\in \Sym^2(\RR^{m+n})$ satisfying
\begin{enumerate}[label={(\roman*)}]
    \item $B \in F$,
    \item $\ell_\Gamma^* A \in G$ for all $\Gamma \in \RR^{m\times n}$.
\end{enumerate}
\end{definition}

\begin{remark}
\label{remark:product-convention-DSL-convexity-short}
    The Dirichlet product $F\star G$ is not commutative. The convention in Definition~\ref{def:product-of-dirichlet-sets-DSL-convexity-short} differs from that of Ross--Witt-Nystr\"om~\cite[Definition~3.1]{Ross-Witt-Nystrom-minimum-principle-MR4350213}. Their convention would flip the roles of the two Dirichlet subequations $F$ and $G$, i.e., for them $A\in F\star G$ if
\begin{enumerate}[label={(\roman*)}]
    \item $D \in G$, and
    \item $\displaystyle{
    \begin{pmatrix}
        I & \Gamma^T
    \end{pmatrix} A \begin{pmatrix}
        I \\
        \Gamma
    \end{pmatrix} = B + \Gamma^T C + C^T \Gamma+ \Gamma^T D\Gamma \ \in F \qquad \forall\ \Gamma \in \RR^{n\times m}, 
    }$
\end{enumerate}    
    where $I$ is the $m\times m$ identity matrix. The convention chosen in Definition~\ref{def:product-of-dirichlet-sets-DSL-convexity-short} is consistent with the minimum principle for Lagrangian graphs \cite[Theorem~3.1]{Dar-Rub2019AMP}.
\end{remark}

\begin{proof}[Proof of Proposition~\ref{prop:DSL-top-two-branches-are-products-convexity-short-note}]
    Fix any $c\ge\frac{n-1}2\pi$. Recall the Dirichlet subequations $\pcal_1 = [0,+\infty) \subset \RR = \Sym^2(\RR^1)$ from Definition~\ref{def:dirichlet-set-misc-definitions-DSL-convexity-short}, $F_{c-\frac{\pi}{2}} \subset \Sym^2(\RR^n)$ from \eqref{eq:subeq-for-SL-DSL-convexity-short} and $\fcal_c$ from \eqref{eq:subeq-for-DSL-DSL-convexity-short}. For any $A\in \Sym^2(\RR^{n+1})$, use the block representation \eqref{eq:blockrepresentationofDSLmatrix-DSL-convexity-short}.
    
    \medskip
    \noindent
    \textit{Step (i):} $\fcal_c \subset \pcal_1 \star F_{c-\frac{\pi}{2}}$ for $c\ge \frac{n-1}2\pi$:

        Suppose $A\in \fcal_c$, i.e., $\widetilde{\Theta}(A) \ge c$. Then $a_{00} \ge 0$ by Lemma~\ref{lemma:time-convexity-DSL-convexity-short}, and by Proposition~\ref{prop:affine-slices-of-DSL-matrix-is-SLag-DSL-convexity-short}
        \[\widetilde{\Theta}(A) \ge c \quad \implies \qquad \widetilde{\theta} (\ell_{{V}}^*A) \ge {c-\frac{\pi}{2}} \quad \iff \quad \ell_{{V}}^*A \in F_{c-\frac{\pi}{2}} \qquad \forall\ {V} \in \RR^n.\]
        Conclude by Definition~\ref{def:product-of-dirichlet-sets-DSL-convexity-short} that $A \in \pcal_1 \star F_{c-\frac{\pi}{2}}$. Thus $\fcal_c \subset \pcal_1 \star F_{c-\frac{\pi}{2}}$.
        
    \medskip
    \noindent
    \textit{Step (ii):} $\pcal_1 \star F_{c-\frac{\pi}{2}} \subset \fcal_c $:

        Suppose $A\in \pcal_1 \star F_{c-\frac{\pi}{2}}$. Note that if a dense subset of matrices in $\pcal_1 \star F_{c-\frac{\pi}{2}}$ lie inside $\fcal_c$, then it would follow that $\pcal_1 \star F_{c-\frac{\pi}{2}} \subset \fcal_c$ since both sets are closed. So assume without loss of generality that $a_{00}\neq 0$. By Definition~\ref{def:product-of-dirichlet-sets-DSL-convexity-short}, $a_{00} >0$ and
        \begin{equation}
            \label{eq:DSL-product-main-assumption-DSL-convexity-short}
            \ell_{{V}}^*A \in F_{c-\frac{\pi}{2}} \iff \widetilde{\theta}(\ell_{{V}}^*A) \ge {c-\frac{\pi}{2}} \quad  \forall\ {V} \in \RR^n.
        \end{equation}
         Note that $a_{00}>0$ since by assumption $a_{00} \neq 0$. Fix ${V} \in \RR^{n\times 1}$ such that $a_{00} {V} + \vec{a} = 0$. For such a choice,
         \[A_{{V}} = \diag(a_{00}, \ell_{{V}}^*A)\]
         by \eqref{eq:eq7-DSL-convexity-short}, where $\ell_{{V}}^* A$ is as in \eqref{eq:pull-back-under-ell-DSL-convexity-short-DSL}. Recall by \eqref{eq:eq7-DSL-convexity-short} that $(A_{V})^+ = \ell_{V}^* A$ and so by \eqref{eq:lag-ang-ST-lag-ang-formula-DSL-convexity-short}, Lemma~\ref{lemma:spectrum-of-AGamma-convexity-note} and the specific choice of $V\in \RR^{n\times 1}$,
         \[\begin{aligned}
              \widetilde{\Theta}(A) =&\ \widetilde{\Theta}(A_{V}) = \widetilde{\theta} (\ell_{V}^* A) + \arg(\sqrt{-1}a_{00} + (a_{00} {V}^T + \vec{a}^T )(I+ \sqrt{-1}\ell_{V}^* A)^{-1} (a_{00} {V} + \vec{a} ) ) \\
              =&\ \widetilde{\theta}(\ell_{V}^* A) + \arg(\sqrt{-1}a_{00} ) \ge\ c-\frac{\pi}{2} + \frac{\pi}{2}
         \end{aligned}\]
         where the final inequality follows from \eqref{eq:DSL-product-main-assumption-DSL-convexity-short}. Conclude that $\widetilde{\Theta}(A) \ge c$ or equivalently $A\in \fcal_c$. This proves the converse inclusion $\pcal_1 \star F_{c-\frac{\pi}{2}} \subset \fcal_c$. \qedhere
\end{proof}


\begin{proof}[Proof of Corollary~\ref{cor:minimum-principle-DSL-convexity-short}]
    By Proposition~\ref{prop:DSL-top-two-branches-are-products-convexity-short-note}, $\fcal_c=\pcal_1 \star F_{c-\frac{\pi}{2}}$ for all $c\ge \frac{n-1}2\pi$. The Dirichlet subequation $F_{c-\frac{\pi}{2}}$ is convex when $c\ge \frac{n-1}2\pi$ \cite{yuan-global-solutions-MR2199179}. So the minimum principle of Ross--Witt-Nystr\"om applies to the $\star$-product $\fcal_c = \pcal_1 \star F_{c-\frac{\pi}{2}}$ \cite[Main Theorem]{Ross-Witt-Nystrom-minimum-principle-MR4350213} and so
    \[u \in \fcal_c(\dcal) \quad \implies \quad  v(x) := \inf_{t\in (0,1) } u(t,x) \in  F_{c-\frac{\pi}{2}}(D). \tag*{\qedhere}\]
\end{proof}

\printbibliography

{\noindent\textsc{University of Maryland, College Park, Maryland}

\noindent\href{mailto:pvasanth@umd.edu}{pvasanth@umd.edu}, \href{mailto:yanir@alum.mit.edu}{yanir@alum.mit.edu}}

\end{document}